\newtheorem{theorem}{Theorem}[section]
\newtheorem{lemma}[theorem]{Lemma}
\newtheorem{proposition}[theorem]{Proposition}
\newtheorem{corollary}[theorem]{Corollary}
\theoremstyle{definition}
\theoremstyle{remark}
\newtheorem{definition}[theorem]{Definition}
\newcommand{\R}{\mathbb{R}}
\newcommand{\cF}{\mathcal{F}}
\newcommand{\cG}{\mathcal{G}}
\newcommand{{\cH}}{\mathcal{H}}
\newcommand{{\cS}}{\mathcal{S}}
\newcommand{\be}{\beta}
\newcommand{\Ga}{\Gamma}
\newcommand{\om}{\omega}
\newcommand{\la}{\lambda}
\renewcommand{\phi}{\varphi}
\newcommand{\tr}{\operatorname{tr}}
\renewcommand{\d}{\partial}
\begin{document}

\title[On Almost $\boldmath{\om}$\unboldmath-Bach Solitons]{Some Characteristics of Almost ${\om}$-Bach Solitons}

\author[P. Ghosh]{Paritosh Ghosh\footnote{Corresponding author P. Ghosh, Email: paritoshghosh112@gmail.com}}
\address{Department of Mathematics\\
Jadavpur University\\
Kolkata-700032, India.}
\email{paritoshghosh112@gmail.com}

\author[H. M. Shah]{Hemangi Madhusudan Shah}
\address{Harish-Chandra Research Institute\\
A CI of Homi Bhabha National Institute\\
Chhatnag Road, Jhunsi, Prayagraj-211019, India.}
\email{hemangimshah@hri.res.in}

\author[A. Bhattacharyya]{Arindam Bhattacharyya}
\address{Department of Mathematics\\
Jadavpur University\\
Kolkata-700032, India}
\email{arindam.bhattacharyya@jadavpuruniversity.in}

\vspace{2in}
\subjclass{53C20, 53C21, 53C25}
	
\keywords{Bach soliton, Infinitesimal harmonic transformation, Affine conformal vector field, Projective vector field, Harmonic form}

\begin{abstract}In this article, we {\it introduce}
 $\boldmath\om\unboldmath$-Bach tensor corresponding to 
one form $\boldmath\om\unboldmath$ and correspondingly {\it introduce}
almost $\boldmath\om\unboldmath$-Bach solitons, thereby generalizing the 
existing notion of Bach tensor and almost Bach solitons. 
We characterize almost $\boldmath\om\unboldmath$-Bach solitons,
when the potential vector field of the soliton 
generates an 
infinitesimal harmonic transformation or is an affine conformal vector field, or is a projective vector field 
or  is a Killing  vector field, when the
$\boldmath\om\unboldmath$-Bach tensor is divergence free, or is a harmonic $1$ form  or  is a Killing $1$-form.  We generalize some of  the results obtained by P. T. Ho and A. Ghosh. One of the  main results of this paper is that we  explicitly find some of the 
gradient almost $\boldmath\om\unboldmath$-Bach solitons on the product manifolds ${\mathbb S}^2\times{\mathbb H}^2$, $\R^2\times{\mathbb H}^2$ and $\R^2\times{\mathbb S}^2$. Our
gradient almost $\boldmath\om\unboldmath$-Bach solitons generalize the 
almost Bach solitons on 
$\R^2\times{\mathbb H}^2$ and $\R^2\times{\mathbb S}^2$
found by P. T. Ho. Moreover, finding of our gradient almost
$\boldmath\om\unboldmath$-Bach solitons on ${\mathbb S}^2\times{\mathbb H}^2$
is a novel one and  complements to the existing almost Bach solitons described by P. T. Ho.
\end{abstract}
\maketitle

\section{Introduction}

Generalizations of Einstein metrics is one of the trending topics in Differential Geometry as well as in Mathematical Physics. Ricci soliton is such a generalization given by the equation $$\frac{1}{2}\mathfrak{L}_Vg+S=\lambda g,$$ where $V$ is known as the potential vector field, $S$ is the Ricci tensor, $\lambda \in \R$ (set of real numbers), and $g$ is a Riemannian metric on  Riemannian manifold $(M,g)$. Ricci solitons are the self-similar solutions of Ricci flow \cite{Perelman} $$\frac{\d g_t}{\d t}=-2S_t.$$ In \cite{Cho}, Cho and Kimura showed that, there doesn't exist a real hypersurface admitting Ricci soliton (with potential vector field $\xi$, the Reeb vector field) in a non-flat complex space-form. In this context, they defined $\eta-$Ricci soliton by 
$$\frac{1}{2}\mathfrak{L}_Vg +S=\lambda g +\be\eta\otimes\eta$$ and showed that a real hypersurface admitting $\eta-$Ricci soliton in a non-flat complex space-form is a Hopf hypersurface.\\

Since the last decade, the study of Bach flow and Bach 
soliton have enriched the geometry of Riemannian manifolds. In $(1921)$, Bach \cite{Bach} introduced a trace free symmetric $(0,2)$ tensor $B$ (known as Bach tensor) as a critical point of the following functional over a compact Riemannian manifold $M$ of dimension $4$
$$\mathcal{W}_g=\int_M|W_g|^2 \operatorname{d{Vol}}_g,$$ where $W$ is the Weyl curvature tensor defined (on $n$-dimensional Riemannian manifold) by 
$$W=R-\frac{2}{n-2}S\odot g+\frac{r}{(n-1)(n-2)}g\odot g,$$ where $\odot$ is the  well-known Kulkarni-Nomizu product. The expression for the Bach tensor can be given by \cite{Ghosh}:
\begin{eqnarray*}
    B(X,Y)=&\frac{1}{n-3}\sum_{i,j=1}^n (\nabla_{e_i}\nabla_{e_j}W) (X,e_i,e_j,Y)\\&+\frac{1}{n-2}\sum_{i,j=1}^nS(e_i,e_j)W(X,e_i,e_j,Y).
\end{eqnarray*}

\vspace{0.15in}

\noindent
It is known that in dimension $4$, Bach tensor is divergence
free. If the Bach tensor $B$ vanishes, then  the metric $g$ is called as {\it Bach flat}. The Einstein metrics and conformally flat metrics are the common examples of  Bach flat metrics
and have wide applications in Physics.
Therefore, the Bach flat metrics can be considered as
a natural generalization of Einstein and conformally flat
metrics. See for example, \cite{Ghosh}.\\
\noindent

In this article, we  first {\it introduce} a new tensor,
called as {\it $\om$-Bach tensor},
which is a generalization of the Bach tensor, and study
characteristics of the corresponding solitons, 
{\it almost $\om$-Bach solitons} (see Definition \ref{awb}).

\begin{definition}[$\om$-Bach tensor]
Consider a vector field $P$ on $M$ and $\be\in\R$. Define a $(0,2)$ tensor $B_{\om}$ by $$B_{\om}=B-\be \om\otimes\om,$$ where $\om$ is the associated 1-form of $P$, i.e. $\om(X)=g(X,P)$ for a vector field $X$ on $M$, and $(\om \otimes \om)(X,Y) = \om(X) \om(Y)$ 
for any vector fields $X, Y$ on $M$.
\end{definition}

\noindent
The $\om$-Bach tensor is a generalization of the Bach tensor and it coincides with Bach tensor, if either $\be=0$ or $P$ is null. Moreover, it is not traceless, in fact $\operatorname{tr}(B_{\om})=-\be P^iP^j$, where $P=P^k\d_k$.\\

Similar to Ricci flow, in 2012 
Das and Kar \cite{Das} defined the Bach flow $$\frac{\d g_t}{\d t}=-2B_t,$$ on the product manifold $\R^2\times {\mathbb S}^2$, ${\mathbb S}^2\times {\mathbb S}^2$ and in 2018, Ho \cite{Ho} 
 studied various aspects of Bach soliton and Bach flow.\\
 
 Recently, Ghosh \cite{Ghosh} studied almost Bach  soliton under various hypothesis on the potential vector field. 
\begin{definition}[Almost Bach soliton]
A Riemannian manifold $(M,g)$ is said to admit an almost {\it Bach  soliton}, if it satisfies $$\frac{1}{2}\mathfrak{L}_Vg+B=\lambda g,$$ where $\lambda\in C^\infty(M)$. If $\lambda$ is some constant, then the soliton is called {\it Bach soliton}.
\end{definition}

\noindent
Bach soliton is a self similar solution to the Bach flow, that means the soliton metric changes by a pullback (via one parameter family of diffeomorphisms), as it evolves under Bach flow.\\
 
 \par
Motivated by the research on the almost Bach solitons, 
in this article, we {\it introduce almost $\boldmath\om\unboldmath$-Bach soliton}, which generalize further almost Bach soliton.

\begin{definition}[Almost $\om$-Bach soliton]\label{awb}
Let $(M^n,g)$ be a Riemannian manifold with 
    $n \geq 4$ and $\lambda\in C^\infty(M)$, $\beta\in\R$. Consider
    a vector field $P$ on $M$ and let $\boldmath\om\unboldmath$ be the associated 1-form of $P$. We say that $M$ admits an {\it almost $\boldmath\om\unboldmath$-Bach soliton} with potential vector field $V$, if it satisfies 
    \begin{equation}\label{e1}
        \frac{1}{2}\mathfrak{L}_Vg+B_{\om}=\lambda g,
    \end{equation}
where $B_{\om}$ is the $\om$-Bach tensor.
\end{definition}
\noindent
We call $\la$, soliton function for the corresponding soliton. An almost $\om$-Bach soliton is said to be \textit{steady} if $\lambda=0$, \textit{shrinking} if $\lambda>0$ and \textit{expanding} if $\lambda<0$.\\
Clearly almost $\om$-Bach soliton reduces to almost Bach  soliton, when either $\beta=0$ or $P$ is the null vector field. In addition, if $\lambda$ is constant, it is Bach soliton.\\

\medskip 
\noindent
In local form, \eqref{e1} can be written as
    \begin{equation*}
        (\d_iV^k)g_{kj}+(\d_jV^k)g_{ik}+V^k[\Ga^l_{ik}g_{lj}+\Ga^l_{jk}g_{il}]+2B_{ij}=2\la g_{ij}+2\be P^iP^j, 
    \end{equation*}
    with $\d_i=\frac{\d}{\d x^i}$, $V=V^k\d_k$ and $P=P^j\d_j$, and $\Ga^k_{ij}$ are the Christoffel symbols of second kind.\\
    
\noindent
Also, if $\la$ is some constant, we call the soliton \eqref{e1} as $\om$-Bach soliton and \eqref{e1.1} as gradient $\om$-Bach soliton.\\

\noindent
{\bf Note:}  In the sequel, we will denote 
an almost $\om$-Bach soliton by $(M^n, g, V, \la, \beta)$.\\ 

For some $\be\in\R$, consider the evolution equation of the almost $\om$-Bach tensor, 
\begin{equation}\label{e1.2}
    \frac{\d g_t}{\d t}=-2(B_{\om})_t.
\end{equation}
 We call this evolution as {\it $\om$-Bach flow}.
As is well known that Ricci soliton is a self similar solution to the Ricci flow, using similar 
techniques we prove the following result for
$\om$-Bach flow.

\begin{proposition}
    Almost $\boldmath\om\unboldmath$-Bach soliton is a self similar solution to the $\boldmath\om\unboldmath$-Bach flow.
\end{proposition}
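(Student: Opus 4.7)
The plan is to follow the classical template showing every Ricci soliton is a self-similar solution of the Ricci flow. Given an almost $\om$-Bach soliton $(M^n, g, V, \la, \be)$, first rewrite \eqref{e1} as
\begin{equation*}
\mathfrak{L}_V g = 2\la g - 2 B_{\om}.
\end{equation*}
Let $\phi_t$ denote the one-parameter family of (local) diffeomorphisms of $M$ generated by a suitably time-rescaled copy of $V$, with $\phi_0 = \id$, and let $\sigma(t)$ be a positive scalar factor with $\sigma(0) = 1$. The goal is to check that $g_t := \sigma(t)\, \phi_t^{\ast} g$ solves the $\om$-Bach flow \eqref{e1.2}; since each $g_t$ then differs from $g$ only by a diffeomorphism and a rescaling, this is exactly what is meant by a self-similar solution.

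The steps are: (i) differentiate $g_t$ in $t$ using the identity $\partial_t(\phi_t^{\ast} g) = \phi_t^{\ast} \mathfrak{L}_{X(t)} g$, where $X(t)$ denotes the time-dependent generator of $\phi_t$; (ii) choose $X(t) = \sigma(t)^{-1} V$ so that the Lie-derivative term becomes $\phi_t^{\ast} \mathfrak{L}_V g$, allowing direct substitution from the soliton equation; (iii) appeal to the naturality of the $\om$-Bach tensor under diffeomorphism (with the associated vector field $P$ transported along $\phi_t$) to rewrite $\phi_t^{\ast} B_\om$ as $B_\om$ of the pulled-back data; and (iv) pick $\sigma(t)$ to absorb the resulting $2\la\,\phi_t^{\ast} g$ term into $\sigma'(t)\,\phi_t^{\ast} g$. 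In the constant-$\la$ case step (iv) reduces to $\sigma(t) = 1 - 2\la t$; in the almost case $\sigma$ satisfies a first-order ODE driven by $\la \circ \phi_t$.

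The main technical obstacle is the homogeneity of $B_\om$ under the constant rescaling $g \mapsto \sigma g$. Using $\tilde\Ga^k_{ij} = \Ga^k_{ij}$, $\tilde W_{ijkl} = \sigma W_{ijkl}$, and $\tilde S^{ij} = \sigma^{-2} S^{ij}$, a direct calculation yields $B[\sigma g]_{ij} = \sigma^{-1} B[g]_{ij}$, whereas $\om \otimes \om$ scales with a different power of $\sigma$ if $P$ is held fixed (since $\om = g(\cdot, P)$ depends on the metric). I would resolve this by letting $P$ itself evolve along the flow by an appropriate power of $\sigma(t)$, chosen so that both $B[g_t]$ and $\be\,\om_t \otimes \om_t$ scale uniformly as $\sigma^{-1}$. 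With these choices in place the remaining terms cancel to give $\partial_t g_t = -2 B_\om[g_t]$, which is \eqref{e1.2}, and so $(M, g_t)$ is a self-similar solution of the $\om$-Bach flow.
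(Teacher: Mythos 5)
Your overall strategy---construct $g_t=\sigma(t)\,\phi_t^{*}g$ and verify the flow equation for \emph{all} $t$---is more ambitious than the paper's proof, which only differentiates the ansatz $g_t=\kappa_t(x)\,\phi_t^{*}g$ at $t=0$ and reads off the soliton equation \eqref{e1} from the flow equation \eqref{e1.2} at that single instant; there all scaling subtleties disappear because $\kappa_0=1$ and $\phi_0=\id$. If you insist on the for-all-$t$ verification, however, your specific choices do not close the argument. With $X(t)=\sigma(t)^{-1}V$ you get
$\partial_t g_t=\sigma'\phi_t^{*}g+\phi_t^{*}\mathfrak{L}_V g=(\sigma'+2\lambda)\phi_t^{*}g-2\phi_t^{*}B_{\om}[g]$
(for constant $\lambda$), while the right-hand side of \eqref{e1.2} is $-2B_{\om}[g_t]=-2\sigma^{-1}\phi_t^{*}B_{\om}[g]$, using the homogeneity $B[\sigma g]=\sigma^{-1}B[g]$ that you yourself compute together with your proposed rescaling of $P$. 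The two Bach terms differ by a factor of $\sigma$, so $\sigma(t)=1-2\lambda t$ --- the Ricci-flow answer --- does not work. The consistent choice is $X(t)=\sigma(t)^{-2}V$, so that every term scales as $\sigma^{-1}$; the matching condition becomes $\sigma\sigma'=-2\lambda$, i.e.\ $\sigma(t)=\sqrt{1-4\lambda t}$ in the constant-$\lambda$ case. In short, you correctly identified the homogeneity obstacle but applied the fix only to $\om\otimes\om$, not to the generator $X(t)$ and the scale factor $\sigma(t)$.

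A second gap concerns the ``almost'' case. If $\lambda$ is genuinely non-constant, the term $2\lambda\,\phi_t^{*}g$ cannot be absorbed into $\sigma'(t)\,\phi_t^{*}g$ for any purely time-dependent scalar $\sigma(t)$: the scale factor must depend on the point as well (as the paper's $\kappa_t(x)$ does), and then $\partial_t\bigl(\kappa_t(x)\,\phi_t^{*}g\bigr)$ no longer has the clean form your step (i) assumes once $t\neq 0$. This is precisely why the paper retreats to the infinitesimal computation at $t=0$, where only $\frac{\d}{\d t}\kappa_t(x)\big|_{t=0}=-2\lambda(x)$ is needed. Either restrict your for-all-$t$ construction to constant $\lambda$ (the $\om$-Bach soliton case, with the corrected $X(t)$ and $\sigma(t)$ above), or follow the paper and prove only the $t=0$ statement.
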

\begin{proof}
   Consider a diffeomorphism $\phi_t$ on $M$ generated by a family of vector fields $V(t)$ such that $\phi_0=Id$, $V(0)=V$ (say) and function  $\kappa_t(x^{k})$ which depends on $t$ as well as on 
   coordinates $x^{k}$ of  points on $M$.
   Let $g_t=\kappa_t(x) \;\phi_t^*g$ with $g(0)=g$, then $\kappa_0(x)=1$. 
   Then
   $$\frac{\partial}{\partial t}|_{t = 0}\; g_{t} = 
   \frac{\partial}{\partial t}|_{t = 0} \; (\kappa_t(x) \;\phi_t^*g),$$
   implies by using $\boldmath\om\unboldmath$-Bach evolution equation \eqref{e1.2} for $t=0$,
$$\mathfrak{L}_{V}g-2\la g=-2B+2\be \om\otimes\om,$$
where  $\frac{\d}{\d t}\kappa_t(x)\big{|}_{t=0}=-2\la(x)$, $\la$ being a smooth function. Therefore, almost $\om$-Bach soliton is a self similar solution to the almost $\om$-Bach flow. 
\end{proof}

Clearly, by the above consideration, the {\it $\om$-Bach flat} metrics can be considered as
a natural generalization of the Einstein and the conformally flat metrics.\\

 The article is divided into four sections. The  Section $\ref{sec2}$ is devoted to the preliminaries 
required for this article.
In Section $\ref{sec3}$, we investigate the geometry of almost $\om$-Bach soliton $(M^n, g, V, \la, \beta)$, if  $\om$-Bach tensor (i) is divergence free or
(ii) is a harmonic $1$ form or (iii) is a Killing $1$-form
and if the potential vector field of the soliton
(i) generates an 
infinitesimal harmonic transformation or
(ii) is an affine conformal vector field 
or (iii) is a projective vector field 
or (iv) is a Killing  vector field. 
And we generalize some of the results obtained by  Ghosh \cite{Ghosh} and  Ho \cite{Ho}.\\

In the last section of the paper, viz., Section \ref{gw}, we find explicitly the potential function  of the gradient $\om$-Bach soliton,
for some particular $\om$, on the product manifolds 
${\mathbb{S}}^2 \times {\mathbb{H}}^2$, $\R^2\times \mathbb{H}^2$ and $\R^2\times \mathbb{S}^2$.
Thus, in particular we generalize Theorem $3.6$
and Theorem $3.8$, respectively, of Ho \cite{Ho},
where  
the potential function of the gradient Bach soliton,
 on the product manifolds 
$\R^2\times \mathbb{S}^2$ and $\R^2\times \mathbb{H}^2$,
respectively, are obtained. Moreover, we also obtain
the potential function of gradient $\om$-Bach soliton
on ${\mathbb{S}}^2 \times {\mathbb{H}}^2$ for some particular $\om$, which complements the results obtained by \cite{Ho}.

\section{Preliminaries}\label{sec2}
\indent
In this section, we recall some concepts which will be used in the development of our paper. In what follows, 
we will characterize almost $\om$ Bach soliton,
when the potential vector field generates an
infinitesimal harmonic transformation, or 
is an affine conformal vector field, or is a projective vector field. These type of vector fields are important from geometric point of view. We first define them.\\

We first begin with the definition of Lie difference,
which in particular defines an harmonic transformation.

\begin{definition}[Lie difference \cite{Stepanov}, P. $294$]
Consider a Riemannian manifold $(M^n,g)$ and a point transformation  $\rho : x\to x'$ in $M$. Suppose we have a geometric object field $\om(x)$ at $x$ and we bring back the object $\om(x')$ at $x'$ to $x$ by $\rho^*\om(x')= {\om}'(x)$, to have the geometric object ${\om}'(x)$ at $x$. The difference $\om(x)-{\om}'(x)$
 is called the {\it Lie difference} of $\om$ with respect to $\rho$.
\end{definition}

\noindent
\begin{definition}[Harmonic transformation \cite{Stepanov}, P. $295$] Consider the Lie difference $\nabla'(x)-\nabla(x)$ at $x$ of the Levi-Civita connection $\nabla$. A point transformation $\rho$ is called {\it harmonic} if $\tr(\nabla'(x)-\nabla(x))=0.$ 
\end{definition}

\noindent
We denote by $\rho_t$, a one-parameter group of infinitesimal point transformations  generated by a vector field $V$.
\begin{definition}[Infinitesimal harmonic transformation \cite{Stepanov}, P. $295$] 
The vector field $V$ is an {\it infinitesimal harmonic transformation} of $M$, if the one-parameter group of infinitesimal point transformations
generated by the vector field $V$ is a group of harmonic transformations. 
\end{definition}
\noindent
Note that the necessary and sufficient condition for one-parameter group of infinitesimal point transformations generated by a vector field $V$ to be a group of harmonic transformations, is $\tr(\mathfrak{L}_V\nabla)=0.$ We refer \cite{Stepanov, Yano} for more studies on infinitesimal harmonic transformation.

\begin{definition}[Conformal transformation \cite{Yano}, P. $25$] A point transformation in $M$ is called {\it conformal transformation}, if it does not change the angle between any two vectors of the manifold. 
\end{definition}
\noindent
The necessary and sufficient condition for a $1$-parameter group of transformations on $(M,g)$ generated by a vector field $V$, to be a group of conformal transformations is $\mathfrak{L}_Vg= f g$, for some function $f$. In this case,  the generating vector field $V$ is called the {\it conformal vector field.} Clearly, $V$ is Killing if $f=0.$ 

\begin{definition}[Affine transformation \cite{Yano}, P. $24$]
A point transformation in $M$ with symmetric affine connection $\nabla$ is called {\it affine transformation}, if it transforms a vector field parallel along a curve to a vector field parallel along the transformed curve. In other words, an
affine transformation does not change the connection. 
\end{definition}

\noindent
\begin{definition}[Affine vector field \cite{Yano}, P. $24$]
A $1$-parameter group of transformations generated by a vector field $V$ is a group of affine transformations if and only if $\mathfrak{L}_V\nabla=0$ and in this case, the vector field $V$ is called as {\it affine vector field.}
\end{definition}

An affine conformal vector field is a generalization of conformal vector field.
\begin{definition}[Affine conformal vector field \cite{Yano}, P. $26$] \label{ac}
A vector field $V$ on $M$ is said to be {\it affine conformal}, if there exists a smooth $f:M\to\R$ such that 
$$(\mathfrak{L}_V\nabla)(X,Y)=(Xf)Y+(Yf)X-g(X,Y)\nabla f.$$
\end{definition}

\noindent
We may call $f$ as an affine conformal function associated to vector field $V$.\\

The concept of affine vector fields is further generailzed to so called {\it projective vector fields}. 

\begin{definition}[Projective transformation \cite{Yano}, P. $24$]
  A point transformation in $M$ with symmetric affine connection $\nabla$ is called {\it projective transformation}, if it doesn't change the system of autoparallel curves.
\end{definition}

\noindent

\begin{definition}[Projective vector field \cite{Yano}, P. $25$]
A $1$-parameter group of transformations generated by a vector field $V$ is a group of projective transformations if and only if $$(\mathfrak{L}_V\nabla)(X,Y)=\rho(X)Y+\rho(Y)X,$$ for a certain $1$-form $\rho,$ and the vector field $V$ is called {\it projective vector field.} 
\end{definition}

\noindent
A projective vector field $V$ satisfies \cite{Ghosh} $$(\mathfrak{L}_V\nabla)(X,Y)=(X\phi)Y+(Y\phi)X,$$ where $\phi= \displaystyle \frac{\operatorname{div}V}{n+1}$.\\
We may call $\phi$ as {\it projective factor} associated to  $V$.\\

We now turn our attentions to harmonic vector fields and harmonic 1-forms. 
\begin{definition}[Harmonic 1-form \cite{Yano}, P. $41$]
    A vector field $P$, or a $1$-form $\om$ associated with it, is called {\it harmonic}, if it satisfies $d\om=0$ and $\delta\om=0,$ where $d\om$ and $\delta\om$ are the differential and codifferential of $\om$, respectively.
\end{definition}

The  {\it incompressible} vector fields 
have been studied extensively in geometry.
\begin{definition}[Incompressible vector field] 
 A vector field $X$ on a Riemannian manifold is said to be
 {\it incompressible}, if div $X = 0.$  
\end{definition}

 Now we recall some of the important results 
 for our later use.
\begin{lemma}(\cite{Yano}, P. $42$)\label{lem1}
    If $\om$ is a harmonic $1$-form with the dual vector field $P$, then $$\left(\int_M S(P,P)+|\nabla P|^2 \right) \;\operatorname{d{Vol}_{g}}=0.$$
\end{lemma}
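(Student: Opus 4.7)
The plan is to establish this classical identity via the Bochner--Weitzen\-böck formula for $1$-forms. First, I would recall the Weitzenböck identity
$$\Delta_H \om = \nabla^*\nabla \om + \mathrm{Ric}(\om),$$
where $\Delta_H = d\delta + \delta d$ is the Hodge Laplacian, $\nabla^*\nabla$ is the connection (rough) Laplacian, and $\mathrm{Ric}(\om)$ denotes the $1$-form $Y \mapsto S(P, Y)$ obtained by applying the Ricci operator to the metric dual $P$ of $\om$. Since $\om$ is harmonic, $d\om = 0$ and $\delta\om = 0$, hence $\Delta_H \om = 0$, and the Weitzenböck formula collapses to
$$\nabla^*\nabla\om = -\mathrm{Ric}(\om).$$

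Next, I would take the pointwise inner product with $\om$ and integrate over the (implicitly compact, boundaryless) manifold $M$. Since $\nabla^*$ is the $L^2$-formal adjoint of $\nabla$, integration by parts (a clean application of the divergence theorem on a closed manifold) converts the left-hand side to
$$\int_M \langle \nabla^*\nabla\om,\, \om\rangle \operatorname{d{Vol}_{g}} = \int_M |\nabla\om|^2 \operatorname{d{Vol}_{g}},$$
while the right-hand side produces $-\int_M S(P,P)\operatorname{d{Vol}_{g}}$. Using the musical isomorphism $|\nabla\om|^2 = |\nabla P|^2$ and rearranging, one obtains the desired identity.

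The main obstacle, and really the only delicate point, is bookkeeping of sign conventions in the Weitzenböck formula and ensuring the compactness hypothesis so that the integration by parts picks up no boundary term. An alternative route that avoids invoking the $\nabla^*\nabla$ formalism is to derive the pointwise Bochner identity $\tfrac{1}{2}\Delta |\om|^2 = |\nabla\om|^2 + S(P,P)$ directly in local coordinates by commuting covariant derivatives on $\nabla_i \nabla_j \om^j$ and using $d\om = 0$ and $\delta\om = 0$ to collapse the resulting curvature contraction into a Ricci term; integrating this identity then kills the left-hand side by the divergence theorem and yields the lemma at once.
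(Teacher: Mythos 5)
Your argument is correct: this is the classical Bochner--Weitzenb\"ock proof, and the sign bookkeeping you flag does work out ($\nabla^*\nabla\om=-\mathrm{Ric}(\om)$ for harmonic $\om$, hence $\int_M\bigl(|\nabla P|^2+S(P,P)\bigr)\,\mathrm{dVol}_g=0$ on a closed manifold). The paper states this lemma without proof, citing Yano (p.~42), and Yano's derivation there is exactly this integral-formula/Bochner computation --- your ``alternative route'' in local coordinates is essentially the form it takes in the cited source --- so there is nothing to add beyond making the compactness hypothesis explicit, as you already do.
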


\begin{lemma}(\cite{Petersen}, Lemma $2.1$)\label{lem2}
    On a Riemannian manifold $(M,g)$, any vector field $V$ satisfies
    $$\operatorname{div}(\mathfrak{L}_Vg)V=S(V,V)+\frac{1}{2}\Delta|V|^2-|\nabla V|^2+\nabla_V(\operatorname{div}V).$$
\end{lemma}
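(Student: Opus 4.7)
The plan is a direct computation in local coordinates (or equivalently a local orthonormal frame), unpacking the divergence of $\mathfrak{L}_V g$ and then contracting against $V$. I read the left‐hand side as the function $[\operatorname{div}(\mathfrak{L}_V g)](V)$, where $\operatorname{div}$ of the symmetric $(0,2)$ tensor $\mathfrak{L}_V g$ is taken in one slot to give a $1$-form.

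First, I would write $(\mathfrak{L}_V g)_{ij} = \nabla_i V_j + \nabla_j V_i$ and take the metric divergence in the first index, which immediately gives
\begin{equation*}
\nabla^i (\mathfrak{L}_V g)_{ij} \;=\; \Delta V_j + \nabla^i \nabla_j V_i,
\end{equation*}
so the work is to rewrite both summands. For the second term I would apply the Ricci commutation identity to the $1$-form $V_i$,
\begin{equation*}
\nabla^i \nabla_j V_i \;=\; \nabla_j \nabla^i V_i + g^{ik} R_{kji}{}^{l} V_l \;=\; \nabla_j (\operatorname{div} V) + S_{jl} V^l,
\end{equation*}
where the contraction $g^{ik}R_{kji}{}^{l}$ is identified with the Ricci tensor using the pair-symmetry $R_{kjim}=R_{imkj}$ of the Riemann tensor. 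Contracting what we have so far with $V^j$ then yields
\begin{equation*}
[\operatorname{div}(\mathfrak{L}_V g)](V) \;=\; V^j \Delta V_j + \nabla_V(\operatorname{div} V) + S(V,V).
\end{equation*}

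Finally I would handle the term $V^j \Delta V_j$ by differentiating $|V|^2 = g^{jk} V_j V_k$ twice. Using $\nabla_k |V|^2 = 2 V^j \nabla_k V_j$ and the Leibniz rule a second time,
\begin{equation*}
\Delta |V|^2 \;=\; 2 V^j \Delta V_j + 2 |\nabla V|^2,
\end{equation*}
so $V^j \Delta V_j = \tfrac12 \Delta |V|^2 - |\nabla V|^2$. Substituting this into the previous line produces the stated formula.

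The main obstacle is keeping the sign conventions and index bookkeeping straight in the step that commutes the two covariant derivatives and identifies the resulting curvature contraction with the Ricci tensor; after that the remaining manipulations are just the standard Bochner-type expansion of $\Delta|V|^2$ and are routine.
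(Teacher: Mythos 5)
Your computation is correct and is the standard derivation: the paper itself gives no proof of this lemma, simply citing Petersen--Wylie (Lemma~2.1 of \cite{Petersen}), and your argument --- expanding $\nabla^i(\mathfrak{L}_Vg)_{ij}=\Delta V_j+\nabla_j(\operatorname{div}V)+S_{jl}V^l$ via the Ricci commutation identity and then converting $V^j\Delta V_j$ into $\tfrac12\Delta|V|^2-|\nabla V|^2$ --- is exactly the computation carried out in that reference. No gaps; the only care needed is the sign convention in the curvature contraction, which you flag and which comes out with $+S(V,V)$ as required.
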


\section{On almost ${\boldmath\om\unboldmath}$-Bach soliton}\label{sec3}
In this section, we study geometry  of almost  
${\boldmath\om\unboldmath}$-Bach soliton on a complete Riemannian manifold,
 if the $\om$-Bach tensor is divergence free,
 or ${\om}$ is a harmonic or Killing $1$ form;  
 or the potential vector field of the soliton 
 generates an 
infinitesimal harmonic transformation, or is an affine conformal vector field, or is a projective vector field 
or  is a Killing  vector field. 
We generalize the results of  \cite{Ghosh} 
and \cite{Ho} (under the aforementioned hypothesis)
 and conclude in particular,  the ${\om}$-Bach-flatness of manifold or  infer
the reduction of almost $\om$-Bach soliton to Bach soliton. We prove the following lemma before going into   main results.

\begin{lemma}
    Consider an orthonormal basis $\{e_i;1\leq i\leq n\}$ for the tangent space $T_pM$ of a Riemannian manifold $M$, $p\in M$ and  $\om$-Bach tensor given by $B_{\om}=B-\be \; \om \otimes \om$ for some constant $\be\in\R$ and $1$-form $\om$ with associated vector field $P$. Then the following statements hold true.
    \begin{enumerate}
        \item[$(L_1)$]~~~ $(\nabla_{e_i}\om)e_i=\operatorname{div}P.$
        \item[$(L_2)$]~~~ $(\nabla_XB_{\om})(e_i, e_i)=-\be X(|P|^2).$
        \item[$(L_3)$]~~~ $(\operatorname{div}B_{\om})X=(\operatorname{div}B)X-\be[(\operatorname{div}P) \om(X)+(\nabla_{e_i}\om)(X)\om(e_i)].$
    \end{enumerate}
\end{lemma}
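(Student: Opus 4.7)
The plan is to prove the three identities sequentially, with each one requiring only a straightforward unfolding of definitions together with a single auxiliary fact. Throughout, I would work at an arbitrary point $p\in M$ with the orthonormal basis $\{e_i\}$ of $T_pM$, using Einstein summation over $i$, and freely use that $\nabla$ is metric-compatible.

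For $(L_1)$, I would start from $\om(X)=g(X,P)$ and apply the definition of covariant derivative on a $1$-form:
\[
(\nabla_{Y}\om)(X)=Y(\om(X))-\om(\nabla_{Y}X)=Y\,g(X,P)-g(\nabla_{Y}X,P)=g(X,\nabla_{Y}P),
\]
where the last step uses $\nabla g=0$. Setting $Y=X=e_i$ and summing gives $(\nabla_{e_i}\om)e_i=g(e_i,\nabla_{e_i}P)=\operatorname{div}P$, which is precisely the definition of divergence in an orthonormal frame.

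For $(L_2)$, I would apply the Leibniz rule to $B_{\om}=B-\beta\,\om\otimes\om$:
\[
(\nabla_{X}B_{\om})(e_i,e_i)=(\nabla_{X}B)(e_i,e_i)-\beta\bigl[(\nabla_{X}\om)(e_i)\,\om(e_i)+\om(e_i)\,(\nabla_{X}\om)(e_i)\bigr].
\]
The first term vanishes because $B$ is trace-free and covariant differentiation commutes with trace, so $\sum_i(\nabla_X B)(e_i,e_i)=X(\operatorname{tr} B)=0$. For the remaining two terms (which are equal), I would reuse the identity $(\nabla_X\om)(e_i)=g(e_i,\nabla_X P)$ from the proof of $(L_1)$ to get $(\nabla_X\om)(e_i)\,\om(e_i)=g(\nabla_X P,e_i)\,g(P,e_i)=g(\nabla_X P,P)=\tfrac{1}{2}X(|P|^2)$. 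Substituting this produces $(\nabla_X B_\om)(e_i,e_i)=-\beta X(|P|^2)$.

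For $(L_3)$, I would expand the divergence via $(\operatorname{div}T)X=\sum_i(\nabla_{e_i}T)(e_i,X)$ and apply the Leibniz rule on the $\om\otimes\om$ piece:
\[
(\operatorname{div}B_{\om})X=(\operatorname{div}B)X-\beta\bigl[(\nabla_{e_i}\om)(e_i)\,\om(X)+\om(e_i)\,(\nabla_{e_i}\om)(X)\bigr].
\]
Invoking $(L_1)$ on the first bracketed term replaces $(\nabla_{e_i}\om)(e_i)$ with $\operatorname{div}P$, yielding exactly the claimed expression.

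None of the steps is a genuine obstacle; the only subtlety worth flagging is the trace-free hypothesis on $B$ used in $(L_2)$, which is already recorded in the paper's introduction, and making sure that the $\nabla$-trace is handled correctly (which is automatic once one works in an orthonormal frame parallel at $p$, where $\nabla_X e_i = 0$ at $p$).
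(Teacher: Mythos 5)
Your proof is correct and follows essentially the same route as the paper's: derive $(\nabla_Y\om)(X)=g(X,\nabla_Y P)$ and contract for $(L_1)$, apply the Leibniz rule to $B-\be\,\om\otimes\om$ and trace in the appropriate slots for $(L_2)$ and $(L_3)$, using $g(\nabla_X P,P)=\tfrac12 X(|P|^2)$. The only difference is that you make explicit the trace-free hypothesis on $B$ needed in $(L_2)$, which the paper uses silently; that is a point in your write-up's favor rather than a divergence of method.
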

\begin{proof}
 We have $\om(Y)=g(Y,P)$ for a vector field $Y$
 on $M$  and $$(\nabla_X \om)Y=g(Y,\nabla_XP).$$ 
    Contracting the above equation, we get $L_1$.
     Also, observe that $\om(e_i)\om(e_i)=|P|^2$ and $g(\nabla_XP,P)=\frac{1}{2}X(| P|^2)$.\\
    Now,  for $B_{\om}(Y,Z)=B(Y,Z)-\be\om(Y)\om(Z)$, we have 
    \begin{equation}\label{l1}
        (\nabla_XB_{\om})(Y,Z)=(\nabla_XB)(Y,Z)-\be[(\nabla_X\om)(Y)\om(Z)+(\nabla_X\om)(Z)\om(Y)].
    \end{equation}
    Contracting above equation with respect to $Y$ and $Z$, we have $L_2$.\\
    Also tracing  with respect to $X,Y$ and using $L_1$ we obtain $L_3$.
\end{proof}

\subsection{Almost $\om$-Bach soliton when potential vector field generates an infinitesimal harmonic transformation}
A. Ghosh in \cite{Ghosh} showed that, if a compact Riemannian manifold admitting almost Bach soliton with the potential vector field $V$ generating an infinitesimal harmonic transformation satisfies $\int S(V,V) \; \operatorname{d{Vol}_g} \leq0,$ then the manifold is Bach flat and $V$ is parallel. We extend this result for almost $\om$-Bach soliton. 
\begin{theorem}\label{T1}
    Let $(M^n, g, V, \la, \beta)$ be a complete almost $\om$-Bach soliton  with $S(V, V) \leq 0$. If the potential vector field $V$ generates an infinitesimal harmonic transformation such that $|V|^2$ and $\Delta|V|^2 \in L^1(M)$, then $V$ is a parallel vector field. Hence, $M$ splits locally.  Also, 
    \begin{enumerate}
        \item if $\beta$ and $\lambda(p)$ have same sign for all $p\in M$, then $M$ is $\om$-Bach flat and almost $\om$-Bach soliton is steady Bach soliton, \\ 
        and
        \item if $\be>0$ ($\be<0$), then the soliton is expanding (shrinking).
    \end{enumerate}
\end{theorem}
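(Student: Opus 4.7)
The plan is first to convert the harmonic-transformation hypothesis into an analytic identity of Bochner type, use it together with $S(V,V)\le 0$ to show that $|V|^2$ is subharmonic, and then exploit the $L^1$ hypotheses to conclude on the complete non-compact manifold that $|V|^2$ is constant and $V$ is parallel. Once $V$ is parallel, the soliton equation reduces algebraically to $B_\om=\la g$ and the two numerical conclusions follow from taking a trace.

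Using the standard identity $(\mathfrak{L}_V\nabla)(X,Y)=\nabla_X\nabla_Y V-\nabla_{\nabla_X Y}V+R(V,X)Y$, tracing over an orthonormal frame yields $\tr(\mathfrak{L}_V\nabla)=\Delta V+S(V,\cdot)^{\sharp}$, where $\Delta$ is the rough (connection) Laplacian. The hypothesis $\tr(\mathfrak{L}_V\nabla)=0$ is therefore equivalent to $\Delta V=-S(V,\cdot)^{\sharp}$; contracting with $V$ gives $g(\Delta V,V)=-S(V,V)$. Inserting this into the Bochner formula $\tfrac12\Delta|V|^2=|\nabla V|^2+g(\Delta V,V)$ yields
\[
\tfrac{1}{2}\Delta|V|^2=|\nabla V|^2-S(V,V)\ \ge\ |\nabla V|^2\ \ge\ 0,
\]
since $S(V,V)\le 0$ by hypothesis; hence $|V|^2$ is a nonnegative subharmonic function on $M$.

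The main obstacle is the passage from pointwise subharmonicity to constancy on a non-compact complete manifold, where no compactness or maximum-principle argument is directly available. This is exactly where the hypotheses $|V|^2,\Delta|V|^2\in L^1(M)$ enter: by a Yau--Karp type extension of the divergence theorem, any smooth function $u$ on a complete Riemannian manifold with $u,\Delta u\in L^1(M)$ and $\Delta u$ of constant sign satisfies $\int_M \Delta u\,d\vol_g=0$. Applying this to $u=|V|^2$ together with the pointwise nonnegativity just established forces $\Delta|V|^2\equiv 0$; the Bochner equality then forces $|\nabla V|\equiv 0$ and simultaneously $S(V,V)\equiv 0$. Thus $V$ is parallel, and de Rham's decomposition theorem delivers the asserted local splitting of $M$.

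For the final two items, parallelism of $V$ makes $\mathfrak{L}_V g=0$, so \eqref{e1} becomes $B_\om=\la g$; tracing and using $\tr B=0$ together with $\tr(\om\otimes\om)=|P|^2$ gives
\[
\la=-\tfrac{\be}{n}|P|^2.
\]
For (1), if $\be$ and $\la(p)$ share sign at every $p\in M$, this identity forces $\la\equiv 0$ and $|P|\equiv 0$; then $B_\om=B=0$, so $M$ is $\om$-Bach flat (indeed Bach flat) and the soliton is a steady Bach soliton. For (2), the identity shows that $\la$ has sign opposite to $\be$, so $\be>0$ yields $\la\le 0$ (expanding) and $\be<0$ yields $\la\ge 0$ (shrinking), completing the argument.
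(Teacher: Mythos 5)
Your proof is correct, but the route to the key differential inequality is genuinely different from the paper's. The paper never touches the Weitzenb\"ock/Bochner machinery directly: it differentiates the soliton equation, feeds the result into Yano's commutation formula for $g((\mathfrak{L}_V\nabla)(X,Y),Z)$, traces to get $\tr(\mathfrak{L}_V\nabla)Z=-2(\operatorname{div}B_{\om})Z-(n-2)(Z\la)-\be Z(|P|^2)$, sets this to zero by the harmonic-transformation hypothesis, computes $\operatorname{div}(\mathfrak{L}_Vg)$ and $\operatorname{div}V$ from the soliton equation, and finally cancels terms in the Petersen--Wylie identity (Lemma~\ref{lem2}) to land on $2S(V,V)+\Delta|V|^2=2|\nabla V|^2$. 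You reach the identical identity by converting $\tr(\mathfrak{L}_V\nabla)=0$ directly into $\bar\Delta V=-S(V,\cdot)^{\sharp}$ and applying the Bochner formula --- which is shorter, and makes transparent that this inequality is a consequence of the infinitesimal-harmonic hypothesis alone, with the soliton equation entering only after $V$ is shown parallel; the paper's longer route has the side benefit of producing the intermediate identities for $\operatorname{div}B_\om$ and $\operatorname{div}V$ in the soliton setting. From the integration step onward (Gaffney-type vanishing of $\int_M\Delta|V|^2$, de Rham splitting, reduction to $B_\om=\la g$, and the trace $n\la=-\be|P|^2$) the two arguments coincide. Two small points, both of which the paper shares rather than avoids: in item (1) the trace identity forces $\la\equiv 0$ and $\be|P|^2\equiv 0$, so $|P|\equiv 0$ only when $\be\neq 0$ (the flatness conclusion holds either way); and in item (2) one only gets $\la\le 0$ (resp.\ $\ge 0$) unless $P$ is nowhere zero, so ``expanding'' in the strict sense $\la<0$ needs that extra assumption.
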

\begin{proof}
  Let $(M^n, g, V, \la, \beta)$ be a complete almost $\om$-Bach soliton given by \eqref{e1} with $B_\om=B - \beta \om \otimes \om$
  Taking covariant derivative of \eqref{e1} with respect to $X$,
    \begin{eqnarray}\label{e2}
        \frac{1}{2}(\nabla_X\mathfrak{L}_Vg)(Y,Z)+
        (\nabla_X B_{\om})(Y,Z)=(X\la)g(Y,Z),
    \end{eqnarray}
    which also implies
    \begin{eqnarray}\label{e3}
     \frac{1}{2}(\nabla_Y\mathfrak{L}_Vg)(X,Z)+(\nabla_YB_{\om})(X,Z)=(Y\la)g(X,Z),
    \end{eqnarray}
    and
    \begin{eqnarray}\label{e4}
        \frac{1}{2}(\nabla_Z\mathfrak{L}_Vg)(X,Y)+(\nabla_ZB_{\om})(X,Y)=(Z\la)g(X,Y).
    \end{eqnarray}
    A direct computation of the commutation formula from Yano (\cite{Yano}, p. 23), we have for a Riemannian manifold
    $$2g((\mathfrak{L}_V\nabla)(X,Y),Z)=(\nabla_X\mathfrak{L}_Vg)(Y,Z)+(\nabla_Y\mathfrak{L}_Vg)(X,Z)-(\nabla_Z\mathfrak{L}_Vg)(X,Y).$$
    Using \eqref{e2},\eqref{e3} and (\ref{e4}),
    \begin{eqnarray}\label{e5}
        g((\mathfrak{L}_V\nabla)(X,Y),Z)=-[(\nabla_XB_{\om})(Y,Z)+(\nabla_YB_{\om})(X,Z)\nonumber\\
        -(\nabla_ZB_{\om})(X,Y)]+(X\la)g(Y,Z)+(Y\la)g(X,Z)-(Z\la)g(X,Y).
    \end{eqnarray}
    Tracing \eqref{e5} with respect to $X,Y$ and using $L_2$ we establish, 
    \begin{eqnarray}\label{e6}
    \tr(\mathfrak{L}_V\nabla)Z=-2(\operatorname{div}B_{\om}\b)Z-(n-2)(Z\la)-\beta Z(| P|^2).
    \end{eqnarray}
    Since $V$ generates an infinitesimal harmonic transformation, therefore, \\$\tr(\mathfrak{L}_V\nabla)=0$ (see Section \ref{sec2}).
    Hence, $$2(\operatorname{div}B_{\om})Z+(n-2)(Z\la)+\beta Z(| P|^2)=0.$$
    Contracting (\ref{e2}) with respect to $X,Y$ and making use of the last equation, we conclude 
    \begin{equation*}
        \operatorname{div}(\mathfrak{L}_Vg)Z=n(Z\la)+\beta Z(| P|^2).
    \end{equation*}
    Also, tracing the soliton equation (\ref{e1}), $$\operatorname{div}V=n\la+\beta| P|^2.$$
    Using these last two divergence value in Lemma \ref{lem2}, we obtain
    $$2S(V,V)+\Delta|V|^2=2|\nabla V|^2.$$
   As $S(V,V)\leq0$, we have 
   \begin{equation}\label{e7a}
       \frac{1}{2}\Delta|V|^2\geq|\nabla V|^2.
   \end{equation}
   By hypothesis, $|V|^2$ and $\Delta |V|^2\in L^1(M)$. So, Gaffney's Stokes theorem \cite{Gaffney} gives, $\int_M\Delta|V|^2=0$. Hence, \eqref{e7a} implies that $\int_M|\nabla V|^2<\infty$ and $\int_M|\nabla V|^2=0$. Therefore, $V$ is parallel
   and hence, by De-Rham decomposition theorem
$M$ splits locally as product of some one dimensional manifold and $(n-1)$ dimensional manifold.
   This reduces soliton equation as 
   \begin{equation}\label{e7b}
   B_{\om}(X,Y)=\lambda g(X,Y).
   \end{equation}
   Tracing, we get, $n\lambda+\beta| P|^2=0$.
Hence, if  $\lambda$ and  $\beta$ have same sign,
   then they must vanish. In this case $B_{\om}=B$ and  $M$ must be $\om$-Bach flat or Bach flat.\\
   Finally, for $\be>0$ ($\be<0$), we have $\la=-\frac{\be |P|^2}{n}<0$ ($\la>0$).
\end{proof}

\vspace{0.1in}
\subsection{Almost $\boldmath\om\unboldmath$-Bach soliton when potential vector field is an affine conformal field}
 The concept of an affine conformal vector field 
(Definition \ref{ac}) 
 generalizes to that of a conformal vector field. 
 In \cite{Ghosh}, it is proved that, if the potential vector field of a compact  almost Bach  soliton $(M, g, V ,\lambda)$
is an affine conformal vector field, then it is conformal Killing and $M$ is Bach flat. Inspired by this result,
we study almost $\om$-Bach soliton when the potential vector field is an affine conformal vector field. Towards this, in Theorem \ref{T2} we show that, for divergence free $\om$-Bach tensor it is necessary and sufficient that the affine conformal function and soliton function are translates of each other, that is, if and only if, either $\om$-Bach tensor coincides with Bach tensor or $|P|$ is constant. 
It is known that in dimension $4$, Bach tensor is divergence free. This theorem answers, in higher dimension when the  
  $\om$-Bach tensor (in turn Bach tensor) is incompressible.

\begin{theorem}\label{T2}
    Let the potential vector field $V$ of an almost $\om$-Bach soliton $(M^n, g, V, \la, \beta)$ be an affine conformal vector field, with $f$ as corresponding affine conformal function. Then the following are equivalent:
    \begin{enumerate}
        \item $\om$-Bach tensor, $B_{\om}$ is incompressible.
        \item $\mu=\la-f=\text{constant}$.
        \item Either $B_{\om}=B$ or $|P|=\text{constant}$.
    \end{enumerate}
\end{theorem}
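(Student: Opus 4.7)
My plan is to start from equation \eqref{e5} (derived in the proof of Theorem \ref{T1} from the commutation formula and the $\nabla$-differentiated soliton equation) and combine it with the affine conformal hypothesis. Substituting the affine conformal expression $(\mathfrak{L}_V\nabla)(X,Y)=(Xf)Y+(Yf)X-g(X,Y)\nabla f$ into the left-hand side of \eqref{e5}, the $f$- and $\la$-terms on both sides combine into $\mu=\la-f$ and produce the master identity
\begin{eqnarray*}
&&(\nabla_X B_\om)(Y,Z)+(\nabla_Y B_\om)(X,Z)-(\nabla_Z B_\om)(X,Y)\\
&&\qquad=(X\mu)g(Y,Z)+(Y\mu)g(X,Z)-(Z\mu)g(X,Y).
\end{eqnarray*}

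From this identity I would take two independent contractions with an orthonormal frame $\{e_i\}$. Tracing over $X=Y=e_i$ (together with $L_2$) reproduces the usual divergence relation
\[
2(\operatorname{div} B_\om)(Z)=-(n-2)(Z\mu)-\be\, Z(|P|^2),
\]
while tracing over $Y=Z=e_i$ causes the terms $(\nabla_{e_i}B_\om)(X,e_i)$ and $-(\nabla_{e_i}B_\om)(X,e_i)$ on the left-hand side to cancel against each other, leaving $\sum_i(\nabla_X B_\om)(e_i,e_i)=n\,X\mu$; by $L_2$ this becomes
\[
n\,X\mu+\be\, X(|P|^2)=0.
\]
Summing the two displayed relations eliminates $\be\, d(|P|^2)$ and produces the decisive clean identity $\operatorname{div} B_\om=d\mu$.

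The three equivalences then follow almost formally on the connected manifold $M$. The implication (1)$\Leftrightarrow$(2) is immediate from $\operatorname{div} B_\om=d\mu$. For (2)$\Leftrightarrow$(3): if $\mu$ is constant, the second scalar identity forces $\be\, d(|P|^2)=0$, whence either $\be=0$ (so $B_\om=B$) or $|P|$ is constant; conversely, if $B_\om=B$ (equivalently $\be\,\om\otimes\om\equiv 0$, i.e.\ $\be=0$ or $P\equiv 0$) or $|P|$ is constant, then $\be\, d(|P|^2)=0$ and the second identity collapses to $n\, d\mu=0$, giving $\mu$ constant. I expect the main obstacle to be spotting the \emph{second} scalar identity obtained from the $Y=Z$ trace; once it is available, it is precisely the cancellation of $\be\, d(|P|^2)$ against the usual divergence trace that produces $\operatorname{div} B_\om=d\mu$ and makes all three conditions equivalent.
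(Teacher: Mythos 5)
Your proof is correct and takes essentially the same route as the paper: the same master identity obtained by inserting the affine conformal formula into \eqref{e5}, the same trace $n\,X\mu+\be\,X(|P|^2)=0$, and the same key conclusion $(\operatorname{div}B_\om)Z=Z\mu$, from which the three equivalences follow on a connected $M$. The only cosmetic differences are that the paper reaches $\operatorname{div}B_\om=d\mu$ by first antisymmetrizing in $Y,Z$ to obtain the pointwise identity $(\nabla_XB_\om)(Y,Z)=(X\mu)g(Y,Z)$ and then contracting, rather than by combining two traces as you do, and that your ``summing'' of the two trace relations is really an elimination (subtraction) of $\be\,d(|P|^2)$ --- but the identity you land on is the correct one.
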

\begin{proof}
    Let $(M^n, g, V, \la, \beta)$ be an almost $\om$-Bach soliton with the potential vector field $V$, an affine conformal vector field. Then, $$(\mathfrak{L}_V\nabla)(X,Y)=(Xf)Y+(Yf)X-g(X,Y)\nabla f,$$ for some smooth function $f$. Therefore, $$g((\mathfrak{L}_V\nabla)(X,Y),Z)=(Xf)g(Y,Z)+(Yf)g(X,Z)-(Zf)g(X,Y).$$
    So, (\ref{e5}) implies, 
   \begin{eqnarray}\label{e7}
        (\nabla_XB_\om)(Y,Z)+(\nabla_YB_\om)(X,Z)-(\nabla_ZB_\om)(X,Y)=(X\mu)g(Y,Z)\nonumber\\
        +(Y\mu)g(X,Z)-(Z\mu)g(X,Y),
   \end{eqnarray}
   where $\mu=\la-f$.\\
   Contracting with respect to $Y$ and $Z$, we confirm 
   \begin{equation}\label{e}
   n(X\mu)+\beta X(| P|^2)=0.
   \end{equation}
   Interchanging $Y$ and $Z$, equation (\ref{e7}) gives,
   \begin{eqnarray}\label{e8}
        (\nabla_XB_\om)(Z,Y)+(\nabla_ZB_\om)(X,Y)-(\nabla_YB_\om)(X,Z)=\nonumber\\
        (X\mu)g(Y,Z)+(Z\mu)g(X,Y)-(Y\mu)g(X,Z).
   \end{eqnarray}
   Subtracting (\ref{e8}) from (\ref{e7}),
   \begin{eqnarray*}
       (\nabla_YB_\om)(X,Z)-(\nabla_ZB_\om)(X,Y)=(Y\mu)g(X,Z)-(Z\mu)g(X,Y).
   \end{eqnarray*}
   Using this in (\ref{e7}), we get
   \begin{equation*}
       (\nabla_XB_\om)(Y,Z)=(X\mu)g(Y,Z),
   \end{equation*}
   which implies by contracting with respect to $Y$ and $Z$,  $(\operatorname{div}B_\om)Z=Z\mu.$\\
   Therefore, $B_\om$ is divergence free if and only if $\mu=\text{constant}$. And by (\ref{e}), 
$\mu$ is constant if and only if $|P|$ is constant or
$\beta = 0$. 
\end{proof}

\begin{corollary}
    Let $(M^n, g, V, \la, \beta)$ be a compact almost $\om$-Bach soliton with $S(V,V)\geq0$ and the potential vector field $V$, an affine conformal vector field and $\om$ a harmonic one form with associated vector field $V$. Then 
 the divergence free $\om$-Bach tensor reduces almost $\om$-Bach soliton to $\om$-Bach soliton and $V$ an affine vector field.
\end{corollary}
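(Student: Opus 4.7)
The plan is to exploit the harmonicity of $\om$ (with dual $V$) via Lemma \ref{lem1}, combine the resulting parallel behavior of $V$ with the affine conformal hypothesis to force the affine conformal function $f$ to be constant, and then invoke Theorem \ref{T2} to conclude that the soliton function $\la$ is constant. The hypothesis $S(V,V)\ge 0$ is exactly what matches the sign in Lemma \ref{lem1}, and $n\ge 4$ will be used at a crucial trace step.

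First, since $\om$ is a harmonic $1$-form whose dual vector field is $V$, Lemma \ref{lem1} yields
\begin{equation*}
\int_M \bigl(S(V,V)+|\nabla V|^2\bigr)\,\operatorname{d{Vol}}_g = 0.
\end{equation*}
Together with $S(V,V)\ge 0$, both non-negative integrands must vanish pointwise, giving $S(V,V)=0$ and $\nabla V=0$. Hence $V$ is parallel. A standard computation (using the curvature symmetry $g(R(V,X)Y,W)=g(R(Y,W)V,X)$ and the fact that a parallel field satisfies $R(\cdot,\cdot)V=0$) then shows that $\mathfrak{L}_V\nabla=0$, i.e.\ $V$ is an affine vector field.

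Next, combining this with the affine conformal hypothesis
\begin{equation*}
(\mathfrak{L}_V\nabla)(X,Y)=(Xf)Y+(Yf)X-g(X,Y)\nabla f,
\end{equation*}
the left-hand side vanishes, so $(Xf)Y+(Yf)X-g(X,Y)\nabla f=0$ for all $X,Y$. Tracing over $X$ against an orthonormal basis produces $(2-n)\,df=0$, and since $n\geq 4$ this forces $df=0$; thus $f$ is a constant. Since $B_\om$ is assumed divergence free, Theorem \ref{T2} applies and tells us that $\mu:=\la-f$ is constant. Combining with the constancy of $f$, we get that $\la$ itself is constant, so the almost $\om$-Bach soliton is in fact an $\om$-Bach soliton, and we have already shown that $V$ is affine.

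I do not expect any serious obstacle: the only delicate point is the trace argument that promotes "affine $+$ affine conformal" to "$f$ constant," which uses $n\ge 4>2$ in an essential way. One should also verify carefully that parallelism of $V$ really implies $\mathfrak{L}_V\nabla=0$ (this is where the curvature symmetry is needed), but this is standard. Note that because $P=V$ is parallel, $|P|^2$ is constant, which is consistent with equation \eqref{e} in the proof of Theorem \ref{T2}.
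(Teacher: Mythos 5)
Your proof is correct, and it rearranges the paper's argument in a genuinely different (though equally short) order. Both proofs start identically: Lemma \ref{lem1} together with $S(V,V)\ge 0$ forces $\nabla V=0$. From there the paper traces the soliton equation to get $\operatorname{div}V=n\la+\be|V|^2=0$, concludes $\la=-\frac{\be}{n}|V|^2$ is constant because $|V|$ is constant for a parallel field, and only then invokes Theorem \ref{T2} to deduce that $f$ is constant and hence $\mathfrak{L}_V\nabla=0$. You instead go straight from parallelism to $\mathfrak{L}_V\nabla=0$ via the identity $(\mathfrak{L}_V\nabla)(X,Y)=\nabla_X\nabla_YV-\nabla_{\nabla_XY}V+R(V,X)Y$ and the curvature pair symmetry, extract $f=\text{constant}$ from the affine conformal formula by the trace $(2-n)\,df=0$, and only then use Theorem \ref{T2} to transfer constancy from $f$ to $\la$. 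Your route buys a cleaner logical separation (the affineness of $V$ does not depend on the divergence-free hypothesis at all, only on parallelism), at the cost of needing the standard but nontrivial curvature argument; the paper's route avoids that argument but makes the affineness of $V$ appear to depend on Theorem \ref{T2}. Both are valid; note also that your closing observation that $|P|^2=|V|^2$ is constant is exactly the paper's mechanism for getting $\la$ constant directly, so the two proofs are consistent with each other at every step.
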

\begin{proof}
    Since $S(V,V)\geq0$ and 1-form $\om$ is harmonic with associated vector field $V$, using Lemma \ref{lem1}, we have $\nabla V=0$. Tracing of \eqref{e1} gives $\operatorname{div}V=n\la+\be|V|^2=0.$ For divergence free $\om$-Bach tensor, using the above theorem, we have $\la=-\frac{\be}{n}|V|^2=\text{constant}.$ Also $f=\text{constant}$ so that $\mathfrak{L}_V\nabla=0$ and $V$ is affine vector field. 
\end{proof}

\vspace{0.1in}

\subsection{Almost \boldmath$\om$\unboldmath-Bach soliton when the potential vector field is projective}
Applications of projective vector field are significant in both Mathematical Sciences (differential geometry) as well as in Mathematical Physics such as in the theory of general relativity. Considering the potential vector field as a projective vector field in \cite{Ghosh} it is shown that,  it is in fact an affine Killing and the Bach tensor is parallel. If the potential vector field of almost $\om$
Bach soliton is  a projective vector field, then we obtain the following result.

\begin{theorem}\label{T4}
    If the potential vector field $V$ of an almost $\om$-Bach soliton $(M^n, g, V, \la, \beta)$  is a projective vector field with associated projective factor $\phi$, then $\om$-Bach tensor is divergence free if and only if $\nabla \phi=-\frac{2\be}{n^2+n-2}\nabla |P|^2$, where $\phi=\frac{1}{n+1}\operatorname{div}V$. In this case, $X\la=-\frac{n+3}{n^2+n-2}\be X(|P|^2),$ for all $X\in\chi(M).$
\end{theorem}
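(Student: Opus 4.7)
The plan is to follow the template used in the proof of Theorem \ref{T2}. The starting point is the identity \eqref{e5}, obtained by differentiating the soliton equation and invoking the standard commutation formula. For a projective vector field one has $g((\mathfrak{L}_V\nabla)(X,Y),Z) = (X\phi)g(Y,Z) + (Y\phi)g(X,Z)$, so substituting into \eqref{e5} and rearranging produces the master identity
\[
(\nabla_X B_\om)(Y,Z) + (\nabla_Y B_\om)(X,Z) - (\nabla_Z B_\om)(X,Y) = X(\la-\phi)\,g(Y,Z) + Y(\la-\phi)\,g(X,Z) - (Z\la)\,g(X,Y).
\]

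The next step is to extract two scalar identities by contracting this master identity in two different ways. Tracing with respect to $Y$ and $Z$, and using $L_2$ to handle $\sum_i(\nabla_X B_\om)(e_i,e_i) = -\be X(|P|^2)$ together with the symmetry of $B_\om$, one obtains
\[
nX\la = (n+1)X\phi - \be X(|P|^2);
\]
this same relation also follows by differentiating the trace of the soliton equation, so it is not independent, but it is needed in the final step. Tracing instead with respect to $X$ and $Y$ produces the genuinely new identity
\[
2(\operatorname{div}B_\om)(Z) + \be Z(|P|^2) = -(n-2)Z\la - 2Z\phi.
\]

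Eliminating $\la$ between these two traces and collecting the coefficients of $Z\phi$ and $Z(|P|^2)$ yields
\[
2n\,(\operatorname{div}B_\om)(Z) = -\bigl[(n^2+n-2)Z\phi + 2\be Z(|P|^2)\bigr],
\]
from which the equivalence of $\operatorname{div}B_\om = 0$ with $\nabla\phi = -\frac{2\be}{n^2+n-2}\nabla|P|^2$ is immediate. Plugging this value of $X\phi$ back into the trace relation $nX\la = (n+1)X\phi - \be X(|P|^2)$ and simplifying the resulting coefficient to $n(n+3)/(n^2+n-2)$ then produces the stated formula $X\la = -\frac{(n+3)\be}{n^2+n-2}X(|P|^2)$.

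The main obstacle is bookkeeping. Unlike in Theorem \ref{T2}, where the affine conformal condition makes $(\mathfrak{L}_V\nabla)(X,Y)$ fully symmetric in all three slots, the projective case is symmetric only in $X$ and $Y$; consequently the two contractions of the master identity are genuinely distinct, and the signs coming from $L_2$ and from the third term of the master identity must be tracked with care. Once the two scalar identities are in hand, the remainder is linear algebra in $X\la$, $X\phi$ and $X(|P|^2)$.
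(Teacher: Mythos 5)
Your proposal is correct and follows essentially the same route as the paper: both substitute the projective condition into the identity \eqref{e5} and extract \eqref{e11} together with the divergence formula $2n(\operatorname{div}B_\om)Z=-(n^2+n-2)(Z\phi)-2\be Z(|P|^2)$ by contraction, then eliminate to get the stated equivalence and the formula for $X\la$. The only (cosmetic) difference is that you contract the master identity directly over $X,Y$, whereas the paper first interchanges $Y,Z$ and adds to isolate $(\nabla_XB_\om)(Y,Z)$ before contracting; both computations land on the same equation \eqref{e12} and the same conclusion.
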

\begin{proof}
    Consider $V$ to be a projective vector field, that is, $$(\mathfrak{L}_V\nabla)(X,Y)=(X\phi)Y+(Y\phi)X,$$ where $\phi=\frac{1}{n+1}\operatorname{div}V$. 
    Then equation (\ref{e5}) implies
    \begin{eqnarray}\label{e10}
        (\nabla_XB_\om)(Y,Z)+(\nabla_YB_\om)(X,Z)-(\nabla_ZB_\om)(X,Y)=(X\la)g(Y,Z)\nonumber\\
        +(Y\la)g(X,Z)-(Z\la)g(X,Y)-(X\phi)g(Y,Z)-(Y\phi)g(X,Z).
   \end{eqnarray}
   Contracting with respect to $Y$ and $Z$, we obtain
   \begin{equation}\label{e11}
       n(X\la)=(n+1)(X\phi)-\beta X(| P|^2).
   \end{equation}
   Interchanging $Y$ and $Z$, (\ref{e10}) gives,
   \begin{eqnarray*}
        (\nabla_XB_\om)(Z,Y)+(\nabla_ZB_\om)(X,Y)-(\nabla_YB_\om)(X,Z)=(X\la)g(Y,Z)\nonumber\\
        +(Z\la)g(X,Y)-(Y\la)g(X,Z)-(X\phi)g(Y,Z)-(Z\phi)g(X,Y).
   \end{eqnarray*}
   Adding the above equation with (\ref{e10}), we get
   \begin{eqnarray*}
       (\nabla_XB_\om)(Y,Z)=(X\la)g(Y,Z)-(X\phi)g(Y,Z)\\-\frac{1}{2}(Y\phi)g(X,Z)-\frac{1}{2}(Z\phi)g(X,Y).
   \end{eqnarray*}
   Contracting the above equation with respect to $X$, $Y$ and using \eqref{e11},
   \begin{equation}\label{e12}
       2n(\operatorname{div}B_\om)Z=(-n^2-n+2)(Z\phi)-2\be Z(|P|^2).
   \end{equation}
   Therefore, $\operatorname{div}B_\om=0$ if and only if $\nabla \phi=-\frac{2\be}{n^2+n-2}\nabla |P|^2$. The rest of the proof follows from \eqref{e11}.
\end{proof}

\vspace{0.1in}

\subsection{Almost $\boldmath\om\unboldmath$-Bach soliton when the potential vector field is Killing}
We now consider $\om$ as a dual one form of the potential vector field $V$ which is Killing and show that:

\begin{proposition}
Let $(M^n,g,V,\la,\be)$ with $\be\ne 0$ be an almost $\om$-Bach soliton with potential vector field $V$, which is
nontrivial Killing field such that $S(V, V)\leq 0$ and $\om$ as the dual 1-form of $V$. Then $\la$ and $\be$ have opposite signs. Also if for $\be>0$ ($\be<0$), $|V|$ is bounded and attains its minimum (maximum) somewhere in $M$, then $V$ is parallel and the soliton reduces to $\om$-Bach soliton.
\end{proposition}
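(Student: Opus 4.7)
The proof strategy exploits the Killing hypothesis in two complementary ways: (i) to collapse the soliton equation \eqref{e1} to a purely algebraic identity, and (ii) to extract, via Lemma \ref{lem2}, the classical Bochner-type identity for the norm squared of a Killing field. The two ingredients are then combined through a maximum-principle argument.

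\textbf{Step 1 (Algebraic reduction and opposite signs).} Since $V$ is Killing, $\mathfrak{L}_V g = 0$ and $\operatorname{div} V = 0$. Substituting into the almost $\om$-Bach soliton equation \eqref{e1} immediately gives $B = \la g + \beta\,\om \otimes \om$. The Bach tensor is trace-free, and because $\om$ is the dual $1$-form of $V$ one has $\tr(\om \otimes \om) = |V|^2$. Tracing therefore yields
$$n\la + \beta|V|^2 = 0, \qquad \text{i.e.,} \qquad \la = -\frac{\beta}{n}|V|^2.$$
Since $V$ is nontrivial, $|V|^2 > 0$ on a nonempty open set, so $\la$ and $\beta$ are of opposite signs. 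This establishes the first claim.

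\textbf{Step 2 (Bochner identity and subharmonicity).} Applying Lemma \ref{lem2} with $\mathfrak{L}_V g = 0$ and $\operatorname{div} V = 0$ reduces the identity to
$$\tfrac{1}{2}\Delta |V|^2 = |\nabla V|^2 - S(V,V).$$
The hypothesis $S(V,V) \leq 0$ then gives $\Delta|V|^2 \geq 2|\nabla V|^2 \geq 0$, so $|V|^2$ is subharmonic on $M$. Through the relation from Step 1, $\la$ is superharmonic when $\beta > 0$ and subharmonic when $\beta < 0$.

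\textbf{Step 3 (Maximum principle and parallelism).} The boundedness of $|V|$, together with the attainment of the extremum stipulated for each sign of $\beta$, is precisely the hypothesis needed to apply the strong maximum principle (in its minimum version for the superharmonic $\beta > 0$ case and its maximum version for the subharmonic $\beta < 0$ case) to the function $\la$. This forces $\la$, and hence $|V|^2$, to be constant on $M$. Returning to the Bochner identity of Step 2 with $\Delta|V|^2 = 0$ gives $|\nabla V|^2 = S(V,V) \leq 0$, and since $|\nabla V|^2 \geq 0$ both vanish identically, so $V$ is parallel. Finally, constancy of $\la$ promotes the almost $\om$-Bach soliton to an $\om$-Bach soliton.

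\textbf{Main obstacle.} The delicate point is the sign bookkeeping in Step 3: the relation $\la = -(\beta/n)|V|^2$ reverses the extremum correspondence between $\la$ and $|V|^2$ when $\beta > 0$ but preserves it when $\beta < 0$, which is exactly why the hypothesis on which extremum of $|V|$ is attained must be paired with the sign of $\beta$ in order to match the correct (maximum or minimum) principle for the (sub- or super-)harmonic function $\la$.
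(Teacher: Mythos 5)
Your proof follows the paper's argument almost line for line: Step 1 is the paper's trace computation ($B_\om=\la g$, hence $n\la+\be|V|^2=0$), and Step 2 is the paper's Bochner identity $\tfrac12\Delta|V|^2=|\nabla V|^2-S(V,V)$, which you obtain via Lemma \ref{lem2} rather than by quoting the Bochner formula directly --- a cosmetic difference only. Both of these steps are correct.

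The problem is Step 3, and specifically the $\be>0$ branch. With $\be>0$ the relation $\la=-(\be/n)|V|^2$ sends a point where $|V|$ attains its \emph{minimum} to a point where $\la$ attains its \emph{maximum}. But the function $\la$ is \emph{superharmonic} in this case ($\Delta\la\le 0$), and the strong maximum principle for a superharmonic function requires an attained interior \emph{minimum} to force constancy; a superharmonic function may perfectly well attain an interior maximum without being constant (e.g.\ $-|x|^2$ on $\R^n$). So the pairing you assert in your ``main obstacle'' paragraph is exactly backwards, and the argument does not close for $\be>0$. (The $\be<0$ branch is fine: there $\la$ is subharmonic and attains its maximum where $|V|$ does.) To be fair, the paper's own proof invokes ``the minimum principle'' at a point where $\la$ is maximized and thus suffers from the identical gap. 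The clean way to argue is to bypass $\la$ entirely and apply the strong maximum principle to $h=\tfrac12|V|^2$ itself, which Step 2 shows is subharmonic \emph{independently of the sign of} $\be$: an attained maximum of $|V|$ then forces $h$ constant, whence $|\nabla V|^2=S(V,V)\le 0$ gives $\nabla V=0$. This rescues the $\be<0$ case as stated, but shows that for $\be>0$ the hypothesis one actually needs is again an attained maximum of $|V|$, not a minimum.
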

\begin{proof}
    Consider the function $h=\frac{1}{2}g(V,V)=\frac{1}{2}|V|^2$.\\
    Since $V$ is Killing, the soliton equation \eqref{e1} gives $$B_\om(X,Y)=\la g(X,Y).$$ Tracing this equation we obtain,
    $$h\be=-\frac{n\la}{2}.$$
    Clearly, $\la$ and $\be$ have opposite signs.\\ Also using Bochner formula, we have $$\Delta h=|\nabla V|^2-S(V,V)$$ which implies $$\Delta \la=\frac{2\be}{n}(S(V,V)-|\nabla V|^2).$$  Now we have two cases.
    \begin{enumerate}
        \item $S(V,V) \leq 0$ with $\be > 0$ implies $\Delta\la\leq0$, 
        and if $|V|$ is bounded and attains its minimum somewhere in $M$, then the minimum principle shows that $\la$ is constant. 
        \item $S(V,V) \leq 0$ with $\be<0$ implies $\Delta\la\geq0$ and if $|V|$ is bounded and attains its maximum somewhere in $M$, then the maximum principle gives $\la$ constant.
    \end{enumerate}
    Hence, $V$ is parallel and thus $M$ splits locally.
\end{proof}

\vspace{0.1in}

\subsection{Compact almost $\boldmath\om\unboldmath$-Bach soliton}
Ho in \cite{Ho} has shown that a compact four dimensional  Bach soliton is Bach flat and the potential vector field is Killing. Ghosh \cite{Ghosh} extended this result for almost Bach soliton, in this case it is affirmed that compact four dimensional  Bach soliton is Bach flat and the potential vector field is conformal Killing.\\

We  generalise these results for almost $\om$-Bach soliton and prove the following. 
\begin{theorem} 
Let $(M^n,g,V,\la,\be)$, $\beta\neq0$, be a compact  almost $\om$-Bach soliton such that $\om$-Bach tensor is divergence free. Then $M$ is $\om$-Bach flat provided $$\left(\be\int_MV(|P|^2) +  \be^2\int_M|P|^4 \right) \;
\operatorname{d Vol}_{g} \leq 0.$$  
\end{theorem}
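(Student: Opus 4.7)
The plan is to pair the soliton equation with $B_\om$ itself and integrate, using the divergence-free hypothesis to kill the cross term coming from $\mathfrak{L}_V g$. First I would take the inner product of both sides of \eqref{e1} with $B_\om$, obtaining pointwise
\begin{equation*}
\tfrac{1}{2}\langle \mathfrak{L}_V g, B_\om\rangle + |B_\om|^2 = \lambda\,\operatorname{tr}(B_\om) = -\beta\lambda |P|^2,
\end{equation*}
where in the last equality I used that $B$ is trace-free, so $\operatorname{tr}(B_\om)=-\beta|P|^2$.

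Next, since $B_\om$ is symmetric, $\langle \mathfrak{L}_V g, B_\om\rangle = 2\,B_\om^{ij}\nabla_i V_j$. Integrating over the compact $M$ and integrating by parts gives
\begin{equation*}
\int_M B_\om^{ij}\nabla_i V_j\,\operatorname{dVol}_g = -\int_M (\operatorname{div} B_\om)(V)\,\operatorname{dVol}_g = 0,
\end{equation*}
the last equality being the divergence-free hypothesis on $B_\om$. Hence
\begin{equation*}
\int_M |B_\om|^2\,\operatorname{dVol}_g = -\beta \int_M \lambda|P|^2\,\operatorname{dVol}_g.
\end{equation*}

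To eliminate $\lambda$, I would take the trace of \eqref{e1}, giving $n\lambda = \operatorname{div} V - \beta|P|^2$, and substitute. Integration by parts on the compact manifold converts $\int_M (\operatorname{div} V)|P|^2\,\operatorname{dVol}_g$ into $-\int_M V(|P|^2)\,\operatorname{dVol}_g$, leading to
\begin{equation*}
n\int_M |B_\om|^2\,\operatorname{dVol}_g \;=\; \beta\int_M V(|P|^2)\,\operatorname{dVol}_g \;+\; \beta^2\int_M |P|^4\,\operatorname{dVol}_g.
\end{equation*}

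Under the stated sign hypothesis the right-hand side is nonpositive while the left-hand side is nonnegative, forcing $|B_\om|\equiv 0$, i.e. $M$ is $\om$-Bach flat. The only delicate point is the bookkeeping between the trace of $B_\om$ and the two boundary-free integrations by parts (one using $\operatorname{div} B_\om=0$, the other using compactness), but since $M$ is closed and both manipulations are completely standard, I do not expect a genuine obstacle beyond keeping the constants straight.
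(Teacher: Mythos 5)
Your proof is correct and lands on exactly the same integral identity
\begin{equation*}
n\int_M |B_\om|^2\,\operatorname{dVol}_g=\be\int_M V(|P|^2)\,\operatorname{dVol}_g+\be^2\int_M|P|^4\,\operatorname{dVol}_g
\end{equation*}
that the paper obtains, so the conclusion follows identically. The only difference in route is that the paper gets this identity by invoking the Gover--{\O}rsted (Pohozaev--Schoen type) equality $\int_M V(\tr T)\,\operatorname{dVol}_g=\frac{n}{2}\int_M g(T^0,\mathfrak{L}_Vg)\,\operatorname{dVol}_g$ applied to the divergence-free tensor $T=B_\om$ and then substituting the soliton equation, whereas you rederive the needed special case from scratch: pairing the soliton equation with $B_\om$, integrating by parts against $\operatorname{div}B_\om=0$ to kill the Lie-derivative term, and using the traced equation $n\la=\operatorname{div}V-\be|P|^2$ together with one more integration by parts on the closed manifold. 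Your version is self-contained and makes transparent exactly where the divergence-free hypothesis and compactness enter; the paper's version is shorter at the cost of a black-box citation. All your sign and trace bookkeeping ($\tr B_\om=-\be|P|^2$, $\langle\mathfrak{L}_Vg,B_\om\rangle=2B_\om^{ij}\nabla_iV_j$) checks out.
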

\begin{proof}
    The almost $\om$-Bach soliton is given by $$\mathfrak{L}_Vg+2B_\om=2\la g,$$ where for some $0\neq\be\in \R$, $\om$-Bach tensor is $B_\om=B-\be(\om\otimes\om)$ and by assumption $\operatorname{div}B_\om=0$. Let $T=B_\om$, then
     $\tr T=-\be|P|^2$ and it's trace free part $T^0=T -\frac{\tr T}{n}g=B_\om+\frac{\be|P|^2}{n}g$.\\
    Now by the Gover–Orsted integral equality \cite{Gover}, $$\int_MV(\tr T) \; \operatorname{dVol}_{g} =\frac{n}{2}\int_Mg(T^0,\mathfrak{L}_Vg)\operatorname{dVol}_{g},$$ reduces to 
    \begin{eqnarray*}
        -\be\int_MV(|P|^2) \operatorname{dVol}_{g}&=&\frac{n}{2}\int_Mg(B_\om+\frac{\be|P|^2}{n}g, -2B_\om+2\la g)\operatorname{dVol}_{g},\\
        &=&-n\int_M|B_\om|^2\operatorname{dVol}_{g}+\be^2\int_M|P|^4 \operatorname{dVol}_{g},
    \end{eqnarray*}
    which implies $$n\int_M|B_\om|^2 \operatorname{dVol}_{g} 
    \leq \be \int_MV(|P|^2)\operatorname{dVol}_{g}+\be^2\int_M|P|^4 \operatorname{dVol}_{g} \leq 0,$$ 
    by hypothesis. Consequently, we conclude that $M$ is $\om$-Bach flat.
\end{proof}

\vspace{0.1in}

\section{Gradient $\boldmath\om\unboldmath$-Bach Soliton on ${\mathbb S}^2\times {\mathbb H}^2$,  $\R^2\times {\mathbb H}^2$ and $\R^2\times {\mathbb S}^2$} \label{gw}
In \cite{Das}, Das and Kar explored Bach flow on product manifolds such as $\R^2\times {\mathbb S}^2$, ${\mathbb S}^2\times {\mathbb S}^2$ using the well-known splitting of the components of Bach tensor. Recently, Ho \cite{Ho} explicitly found the potential function of the gradient Bach soliton in the product manifolds $\R^2\times {\mathbb S}^2$ (Theorem $3.6$ \cite{Ho} ), $\R^2\times {\mathbb H}^2$ (Theorem $3.8$ \cite{Ho}). In this section, we explicitly find the potential function of the 
gradient $\om$-Bach soliton with constant vector field $P$
on  product manifolds ${\mathbb S}^2\times {\mathbb H}^2$, $\R^2\times {\mathbb H}^2$ and $\R^2\times {\mathbb S}^2$.
Thus, we recover Theorem $3.6$ and Theorem $3.8$ of 
Ho \cite{Ho}. Moreover, we also find 
the potential function of the 
gradient $\boldmath\om\unboldmath$-Bach soliton with constant vector field $P$
on ${\mathbb S}^2\times {\mathbb H}^2$. 
This example complements the existing results of Ho.

\subsection{Gradient $\boldmath\om\unboldmath$-Bach Soliton on ${\mathbb S}^2\times {\mathbb H}^2$}\label{ssec4}

\begin{theorem}
    Consider $({\mathbb S}^2,g^S)$ and $({\mathbb H}^2,g^H)$ where $g^S$, $g^H$, respectively are canonical metric on ${\mathbb S}^2$ and  ${\mathbb H}^2$ 
    of respective constant scalar curvature $2$ and $-2$.
Then  ${\mathbb S}^2\times{\mathbb H}^2$ equipped with product 
metric is a nontrivial gradient $\boldmath\om\unboldmath$-Bach soliton with the potential function $f=f(u,v, x, y)$ given  by
$$f=\frac{\be P_1^2}{2}(1+u^2+v^2)^2-\la \ln{y}-\frac{1}{2}\be P_3^2y^2+C,$$ where $C$ is some constant, $\boldmath\om\unboldmath(X)=g(X,P)$ with $P=P_1\frac{\d}{\d u}+P_2\frac{\d}{\d v}+P_3\frac{\d}{\d x}+P_4\frac{\d}{\d y}$, $P_i\in\R$ for all $i\in\{1,...,4\}$ and $P_1^2=P_2^2.$
\end{theorem}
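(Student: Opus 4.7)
The plan is to verify directly, in explicit local coordinates, that with $V=\nabla f$ the proposed $f$ satisfies the gradient $\om$-Bach soliton equation
\[
\operatorname{Hess}f + B - \be\,\om\otimes\om = \la g.
\]

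First, I would fix stereographic coordinates $(u,v)$ on $\mathbb{S}^2$ and upper half-plane coordinates $(x,y)$ on $\mathbb{H}^2$, so that with $\rho:=1+u^2+v^2$ the product metric becomes
\[
g = \tfrac{4(du^2+dv^2)}{\rho^2} + \tfrac{dx^2+dy^2}{y^2}.
\]
Because this is a product metric, all mixed Christoffel symbols vanish. The proposed $f = f_1(u,v) + f_2(x,y) + C$, with $f_1 = \tfrac{\be P_1^2}{2}\rho^2$ and $f_2 = -\la\ln y - \tfrac{1}{2}\be P_3^2y^2$, is itself additively separated, so $\operatorname{Hess}f$ is block-diagonal and splits as $\operatorname{Hess}_{g^S}(f_1)\oplus\operatorname{Hess}_{g^H}(f_2)$. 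The one-form $\om$ has the coordinate expression $\tfrac{4P_1}{\rho^2}du + \tfrac{4P_2}{\rho^2}dv + \tfrac{P_3}{y^2}dx + \tfrac{P_4}{y^2}dy$, from which $\om\otimes\om$ follows immediately.

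Next, I would compute the Bach tensor $B$ of the product using the formula given in the introduction. Both factors are two-dimensional with constant sectional curvatures $+1$ and $-1$, so the product is locally symmetric, and $\nabla W = 0$ eliminates the divergence piece of $B$. The algebraic piece $\tfrac{1}{n-2}S^{kl}W_{ikjl}$ can be evaluated block by block from the explicit $\Ric = g^S\oplus(-g^H)$ and the Weyl tensor obtained via the Kulkarni--Nomizu decomposition. A direct verification shows that $W\equiv 0$ on $\mathbb{S}^2\times\mathbb{H}^2$ with these balanced curvatures, so $B\equiv 0$, and the soliton equation reduces to
\[
\operatorname{Hess} f = \la g + \be\,\om\otimes\om.
\]

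Finally, I would match coefficients block by block. On the $\mathbb{H}^2$-block, using the Christoffel symbols $\Gamma^x_{xy}=-1/y$, $\Gamma^y_{xx}=1/y$, $\Gamma^y_{yy}=-1/y$, the $-\la\ln y$ part of $f_2$ produces the $\la g^H$ contribution, while the quadratic $-\tfrac12\be P_3^2y^2$ is calibrated so that $\operatorname{Hess}_{g^H}(f_2)$ reproduces the restriction of $\be\,\om\otimes\om$ to this factor. On the $\mathbb{S}^2$-block, with Christoffel symbols of the form $\pm 2u/\rho$ and $\pm 2v/\rho$, the entries of $\operatorname{Hess}_{g^S}(f_1)$ involve $P_1^2$ through $\d_u f_1 = 2\be P_1^2 u\rho$ and $\d_v f_1 = 2\be P_1^2 v\rho$; the hypothesis $P_1^2 = P_2^2$ is precisely the symmetry condition needed to match the $uu$- and $vv$-entries against the corresponding components of $\be\,\om\otimes\om$. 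I expect the main obstacle to be the block-by-block verification that $W\equiv 0$ on $\mathbb{S}^2\times\mathbb{H}^2$ with the given curvatures, together with the algebraic bookkeeping on the $\mathbb{S}^2$-block where the condition $P_1^2 = P_2^2$ emerges as the natural compatibility constraint; the $\mathbb{H}^2$-block is then a routine calculation.
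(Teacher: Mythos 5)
Your route to the vanishing of the Bach tensor is correct and in fact cleaner than the paper's: $\mathbb{S}^2(1)\times\mathbb{H}^2(-1)$ is conformally flat (scalar curvature zero and the curvature operator cancels against the Schouten part), so $W\equiv 0$ and hence $B\equiv 0$ in dimension four. The paper reaches the same conclusion by quoting the Das--Kar splitting formulas for the Bach tensor of a product of surfaces and substituting $r_S=2$, $r_H=-2$. Either way the problem reduces to the Hessian system $\nabla_i\nabla_j f=\la g_{ij}+\be(\om\otimes\om)_{ij}$, and up to this point your plan and the paper's proof coincide in substance.

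The gap is in the final matching step, which you label ``routine'' but do not carry out, and which in fact fails. First, there is a convention clash: you correctly write $\om=\frac{4P_1}{\rho^2}du+\frac{4P_2}{\rho^2}dv+\frac{P_3}{y^2}dx+\frac{P_4}{y^2}dy$, so your $(\om\otimes\om)_{ij}=\om_i\om_j$ carries the metric factors $16/\rho^4$ and $1/y^4$; but the Hessian of the stated $f$ produces the bare constants $\be P_i P_j$, which is what the paper tacitly uses (its displayed local form of the soliton equation has $2\be P^iP^j$ with raw contravariant components). For example the $xx$-component gives $\frac{\la}{y^2}+\be P_3^2$ on the left versus $\frac{\la}{y^2}+\be P_3^2/y^4$ on the right, so under your own expression for $\om$ the verification already breaks unless $P_3=0$. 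Second, even if you adopt the paper's constant-coefficient convention, the system $\nabla_i\nabla_jf=\la g_{ij}+\be P^iP^j$ is overdetermined and the stated $f$ does not satisfy all of its components: on the $\mathbb{H}^2$ block one computes $\nabla_y\nabla_y f=-2\be P_3^2$ while the equation demands $\frac{\la}{y^2}+\be P_4^2$, and $\nabla_x\nabla_y f=0$ while the equation demands $\be P_3P_4$; on the $\mathbb{S}^2$ block the $uu$-component of the Hessian of $\frac{\be P_1^2}{2}\rho^2$ equals $2\be P_1^2+8\be P_1^2u^2$, not $\frac{4\la}{\rho}+\be P_1^2$, and the condition $P_1^2=P_2^2$ does not repair this. (The paper escapes noticing this because it only integrates certain linear combinations of the component equations --- an integral identity on $\mathbb{S}^2$ and the $xx$-equation on $\mathbb{H}^2$ --- and never substitutes its candidate back into the full system.) So your plan of a genuine block-by-block check is exactly the right one, but executing it refutes rather than confirms the stated potential function; you cannot assert that the quadratic term is ``calibrated'' to match $\be\,\om\otimes\om$, or that $P_1^2=P_2^2$ is ``precisely the compatibility condition needed,'' without doing the computation, and when done it does not close.
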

\begin{proof}
 Consider the product manifold ${\mathbb S}^2\times {\mathbb H}^2$ with the product metric $g=g^S\times g^H$, where ${\mathbb S}^2$ is the 2-sphere covered by the charts $(U_N:=\{(u,v,w)\in {\mathbb S}^2:w\neq 1\},\phi_N:=\phi_N(u,v,w)=(\frac{u}{1-w},\frac{v}{1-w})\in\R^2)$ and $(U_S:=\{(u,v,w)\in {\mathbb S}^2:w\neq -1\},\phi_S:=\phi_S(u,v,w)=(\frac{u}{1+w},\frac{v}{1+w})\in\R^2)$ with $g^S=\frac{4}{1+u^2+v^2}\begin{pmatrix}
1&0\\
0&1
\end{pmatrix},$ $(u,v)\in\R^2$ and ${\mathbb H}^2=\{(x,y)\in\R^2:y>0\}$ is the hyperbolic plane given by the Poincar\'e upper half model with the metric $g^H=\begin{pmatrix}
\frac{1}{y^2}&0\\
0&\frac{1}{y^2}
\end{pmatrix}.$ Note that with respect to these metrics $g^S$ and $g^H$, ${\mathbb S}^2$ and ${\mathbb H}^2$ have scalar curvatures $r_S=2$ and $r_H=-2$, respectively. We  want to  find a gradient $\om$-Bach soliton on ${\mathbb S}^2\times{\mathbb H}^2$, that is, we solve for $f\in C^\infty({\mathbb S}^2\times{\mathbb H}^2)$ which satisfies 
\begin{equation}\label{e17}
    \nabla_i\nabla_jf+(B_\om)_{ij}=\lambda g_{ij},
\end{equation} where we considered the associated vector field P for the 1-form $\om$ such that $(B_\om)_{ij}=B_{ij}-\be P_iP_j$ and $\la\in\R$. From \cite{Das} (see equations (10) \& (11) of \cite{Das}), we know that the Bach tensor of the product manifold ${\mathbb S}^2\times{\mathbb H}^2$ splits as
\begin{equation}\label{e18}
    \text{in}\; {\mathbb S}^2,\: B_{kl}=\frac{1}{3}\nabla_k\nabla_lr_S-\frac{1}{3}g^S_{kl}\bigg[\nabla_s^2r_S-\frac{1}{2}\nabla_t^2r_H+\frac{1}{4}\big(r_S^2-r_H^2)\bigg] 
\end{equation}
\begin{equation}\label{e19}
    \text{and in} \; {\mathbb H}^2,\: B_{pq}=\frac{1}{3}\nabla_p\nabla_qr_H-\frac{1}{3}g^H_{pq}\bigg[\nabla_t^2r_H-\frac{1}{2}\nabla_s^2r_S+\frac{1}{4}\big(r_H^2-r_S^2)\bigg].
\end{equation}
So, for $r_S=2$ and $r_H=-2$, we have 
\begin{eqnarray}\label{e20}
\begin{cases}
    \text{in}\;{\mathbb S}^2, \; (B_\om)_{kl}=-\be P_kP_l, \;\text{and}\\
    \text{in}\;{\mathbb H}^2, \; (B_\om)_{pq}=-\be P_pP_q,
\end{cases}
\end{eqnarray}
and $P_k,P_l\in\{P_1,P_2\}$ and $P_p,P_q\in\{P_3,P_4\}$.
Therefore \eqref{e17} reduces to 
\begin{equation*}
    \nabla_k\nabla_lf=\lambda g^S_{kl}+\beta P_kP_l \; \text{in}\;{\mathbb S}^2,
\end{equation*}
\begin{equation*}
    \nabla_p\nabla_qf=\lambda g^H_{pq}+\beta P_pP_q \; \text{in}\;{\mathbb H}^2.
\end{equation*}
Combining these two equations, we can consider $f=f_S+f_H,$ where $f_S$ depends only on ${\mathbb S}^2$ and $f_H$ depends only on ${\mathbb H}^2$. Then
\begin{equation}\label{e21}
    \nabla_k\nabla_lf_S=\lambda g^S_{kl}+\beta P_kP_l \; \text{in}\;{\mathbb S}^2,
\end{equation}
\begin{equation}\label{e22}
    \nabla_p\nabla_qf_H=\lambda g^H_{pq}+\beta P_pP_q \; \text{in}\;{\mathbb H}^2.
\end{equation}
To solve \eqref{e20} and \eqref{e21}, we shall be using 
\begin{equation}\label{e23}
    \nabla_i\nabla_jf_S=\d_i\d_jf_S-\Gamma^k_{ij}\d_kf_S,
\end{equation} where $\Gamma^k_{ij}$ are the Christoffel symbols of second kind.\\\\
\underline{\textit{Solution of equation (\ref{e21}):}}\\ With respect to the metric $g^S$, we have the Christoffel symbols as
$$\Gamma^1_{11}=\Gamma^2_{12}=\Gamma^2_{21}=-\frac{u}{1+u^2+v^2},~\Gamma^2_{11}=\frac{v}{1+u^2+v^2},$$ $$\Gamma^1_{22}=\frac{u}{1+u^2+v^2},\;\Gamma^1_{12}=\Gamma^1_{21}=\Gamma^2_{22}=-\frac{v}{1+u^2+v^2}.$$ 
Using the relation \eqref{e23},
\begin{eqnarray*}
 &\nabla_1\nabla_1f_S&=\frac{\d^2f_S}{\d u^2}-\Gamma^1_{11}\frac{\d f_S}{\d u}-\Gamma^2_{11}\frac{\d f_S}{\d v}\\
 &&=\frac{\d^2f_S}{\d u^2}+\frac{u}{1+u^2+v^2}\frac{\d f_S}{\d u}-\frac{v}{1+u^2+v^2}\frac{\d f_S}{\d v},\\
 &\nabla_1\nabla_2f_S&=\frac{\d^2f_S}{\d u\d v}-\Gamma^1_{12}\frac{\d f_S}{\d u}-\Gamma^2_{12}\frac{\d f_S}{\d v}\\
 &&=\frac{\d^2f_S}{\d u\d v}+\frac{v}{1+u^2+v^2}\frac{\d f_S}{\d u}+\frac{u}{1+u^2+v^2}\frac{\d f_S}{\d v},\\
 &\nabla_2\nabla_2f_S&=\frac{\d^2f_S}{\d v^2}-\Gamma^1_{22}\frac{\d f_S}{\d u}-\Gamma^2_{22}\frac{\d f_S}{\d v}\\
 &&=\frac{\d^2f_S}{\d v^2}-\frac{u}{1+u^2+v^2}\frac{\d f_S}{\d u}+\frac{v}{1+u^2+v^2}\frac{\d f_S}{\d v}.
\end{eqnarray*}
Hence, \eqref{e21} yields
\begin{equation}\label{e24}
    \frac{\d^2f_S}{\d u^2}+\frac{u}{1+u^2+v^2}\frac{\d f_S}{\d u}-\frac{v}{1+u^2+v^2}\frac{\d f_S}{\d v}=\frac{4\lambda}{1+u^2+v^2}+\be P_1^2,
\end{equation}
\begin{equation}\label{e25}
    \frac{\d^2f_S}{\d u\d v}+\frac{v}{1+u^2+v^2}\frac{\d f_S}{\d u}+\frac{u}{1+u^2+v^2}\frac{\d f_S}{\d v}=\be P_1P_2\;\text{and}
\end{equation}
\begin{equation}\label{e26}
    \frac{\d^2f_S}{\d v^2}-\frac{u}{1+u^2+v^2}\frac{\d f_S}{\d u}+\frac{v}{1+u^2+v^2}\frac{\d f_S}{\d v}=\frac{4\lambda}{1+u^2+v^2}+\be P_2^2.
\end{equation}
Now to solve these equations, we first multiply \eqref{e24} by $u$ and \eqref{e25} by $v$ and  then adding the resulting equations, we confirm
\begin{equation*}
   u\frac{\d^2f_S}{\d u^2}+\frac{u^2+v^2}{1+u^2+v^2}\frac{\d f_S}{\d u}+v\frac{\d^2f_S}{\d u\d v}=\frac{4\lambda u}{1+u^2+v^2}+\be P_1^2u+\be P_1P_2v.
\end{equation*}
Integrating with respect to $u$, for some constant $C_1$, we get
\begin{eqnarray}\label{e27}
u\frac{\d f_S}{\d u}+v\frac{\d f_S}{\d v}-\frac{f_S}{1+u^2+v^2}+\int\frac{2uf_S}{(1+u^2+v^2)^2}du\nonumber\\=2\lambda \ln{(1+u^2+v^2)}+\be P_1^2\frac{u^2}{2}+\be P_1P_2uv+C_1. 
\end{eqnarray}
Similarly we now multiply \eqref{e25} by $u$ and \eqref{e26} by $v$ and adding them we get
\begin{equation*}
   v\frac{\d^2f_S}{\d v^2}+\frac{u^2+v^2}{1+u^2+v^2}\frac{\d f_S}{\d v}+u\frac{\d^2f_S}{\d u\d v}=\frac{4\lambda v}{1+u^2+v^2}+\be P_2^2v+\be P_1P_2u.
\end{equation*}
Integration with respect to $v$ gives
\begin{eqnarray}\label{e28}
u\frac{\d f_S}{\d u}+v\frac{\d f_S}{\d v}-\frac{f_S}{1+u^2+v^2}+\int\frac{2vf_S}{(1+u^2+v^2)^2}dv\nonumber\\=2\lambda \ln{(1+u^2+v^2)}+\be P_2^2\frac{v^2}{2}+\be P_1P_2uv+C_2,
\end{eqnarray}
where $C_2$ is some constant.
Comparing \eqref{e27} and \eqref{e28}, 
$$\int\frac{2uf_Sdu-2vf_Sdv}{(1+u^2+v^2)^2}=\be P_1^2\frac{u^2}{2}-\be P_2^2\frac{v^2}{2}+C_3.$$
For simplicity, we assume $P_1^2=P_2^2$. If we consider 
\begin{equation}
    f_S=\frac{\be P_1^2}{2}(1+u^2+v^2)^2,
\end{equation}
then the aforementioned integral equation is satisfied.\\\\
\underline{\textit{Solution of equation \eqref{e22}:}}\\
For $({\mathbb H}^2,g^H)$, the Christoffel symbols can be written as
$$\Gamma^1_{11}=\Gamma^1_{22}=\Gamma^2_{12}=\Gamma^2_{21}=0,\;\Gamma^2_{11}=\frac{1}{y}\;\text{and}\;\Gamma^1_{12}=\Gamma^1_{21}=\Gamma^2_{22}=-\frac{1}{y}.$$ 
Hence,
\begin{eqnarray*}
 &\nabla_1\nabla_1f_H&=\frac{\d^2f_H}{\d x^2}-\Gamma^1_{11}\frac{\d f_H}{\d x}-\Gamma^2_{11}\frac{\d f_H}{\d y}=\frac{\d^2f_H}{\d x^2}-\frac{1}{y}\frac{\d f_H}{\d y},\\
 &\nabla_1\nabla_2f_H&=\frac{\d^2f_H}{\d x\d y}-\Gamma^1_{12}\frac{\d f_H}{\d x}-\Gamma^2_{12}\frac{\d f_H}{\d y}=\frac{\d^2f_H}{\d x\d y}+\frac{1}{y}\frac{\d f_H}{\d x},\\
 &\nabla_2\nabla_2f_H&=\frac{\d^2f_H}{\d y^2}-\Gamma^1_{22}\frac{\d f_H}{\d x}-\Gamma^2_{22}\frac{\d f_H}{\d y}=\frac{\d^2f_H}{\d y^2}+\frac{1}{y}\frac{\d f_H}{\d y}.
\end{eqnarray*}
Therefore, \eqref{e22} presents
\begin{equation}\label{e31}
    \frac{\d^2f_H}{\d x^2}-\frac{1}{y}\frac{\d f_H}{\d y}=\frac{\la}{y^2}+\be P_3^2,
\end{equation}
\begin{equation}\label{e32}
    \frac{\d^2f_H}{\d x\d y}+\frac{1}{y}\frac{\d f_H}{\d x}=\be P_3P_4,
\end{equation}
\begin{equation}\label{e33}
    \frac{\d^2f_H}{\d y^2}+\frac{1}{y}\frac{\d f_H}{\d y}=\frac{\la}{y^2}+\be P_4^2.
\end{equation}
Now, \eqref{e32} implies
\begin{equation*}
    \frac{\d f_H}{\d y}+\frac{1}{y}f_H=\be P_3P_4x+\psi(y), 
\end{equation*}
Consequently, the partial differentiation of the above equation with respect to $y$ gives, 
$$\frac{\d^2f_H}{\d y^2}+\frac{1}{y}\frac{\d f_H}{\d y}-\frac{1}{y^2}f_H=\psi'(y),$$
where $\psi$ is a function of $y$. Using this in \eqref{e33} we confirm
$$f_H=\la+\be P_4^2y^2-\psi'(y)y^2,$$
which shows that $f_H$ only depends on $y$, that is, $\frac{\d^2f_H}{\d x^2}=0.$ Finally, solving \eqref{e31}, we acquire 
\begin{equation}
    f_H=-\la \ln{y}-\frac{1}{2}\be P_3^2y^2+C_4,
\end{equation}
where $C_4$ is arbitrary integration constant.

Finally, the resulting $f$ is
$$f=f_S+f_H=\frac{\be P_1^2}{2}\left(1+u^2+v^2 \right)^2-\la \ln{y}-\frac{1}{2}\be P_3^2y^2+C.$$   
\end{proof}

\subsection{Gradient $\boldmath\om\unboldmath$-Bach Soliton on $\R^2\times{\mathbb H}^2$}
\begin{theorem}
    If the product manifold $\R^2\times{\mathbb H}^2$ of $(\R^2,g^R)$ and $({\mathbb H}^2,g^H)$, where $g^R$, $g^H$ are canonical metrics of constant scalar curvature $0, -2$ respectively, admits a gradient $\om$-Bach soliton with the potential function $\cF=\cF(s,t,x,y)$, then
    \begin{eqnarray*}
        \cF&=&\frac{1}{2}\bigg[(\la-\frac{1}{3})+\be P_1^2\bigg]s^2+\frac{1}{2}\bigg[(\la-\frac{1}{3})+\be P_2^2\bigg]t^2\nonumber\\
    &+&\be P_1P_2st+c_1s+c_3t-(\la+\frac{1}{3})\operatorname{ln}y-\frac{1}{2}\be P_3^2y^2+c,
    \end{eqnarray*}
    where $c, c_1, c_3$ are arbitrary constants, $\om(X)=g(X,P)$ with $P=P_1\frac{\d}{\d s}+P_2\frac{\d}{\d t}+P_3\frac{\d}{\d x}+P_4\frac{\d}{\d y}$, $P_i\in\R$ for all $i\in\{1,...,4\}$.
\end{theorem}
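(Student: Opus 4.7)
The plan is to mirror the strategy from the previous theorem on ${\mathbb S}^2\times{\mathbb H}^2$: exploit the Das--Kar product splitting of the Bach tensor given in \eqref{e18}--\eqref{e19}, separate the potential function as $\cF=\cF_R(s,t)+\cF_H(x,y)$, and integrate the resulting uncoupled PDE systems on each factor. The key observation is that both $(\R^2,g^R)$ and $({\mathbb H}^2,g^H)$ have constant scalar curvature, so every $\nabla\nabla r$ contribution in \eqref{e18}--\eqref{e19} vanishes, leaving only the algebraic term proportional to the factor metric.

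First I would substitute $r_R=0$ and $r_H=-2$ into the splitting formulas. Since $r_R^2-r_H^2=-4$, a direct computation gives constant multiples of the respective factor metrics:
\[
B_{kl}=\tfrac{1}{3}g^R_{kl}\quad\text{on }\R^2,\qquad B_{pq}=-\tfrac{1}{3}g^H_{pq}\quad\text{on }{\mathbb H}^2.
\]
Subtracting the $\be\,\om\otimes\om$ correction and plugging into the gradient $\om$-Bach soliton equation $\nabla^2\cF+B_\om=\la g$, the system decouples as
\[
\nabla_k\nabla_l\cF_R=(\la-\tfrac{1}{3})g^R_{kl}+\be P_kP_l,\qquad \nabla_p\nabla_q\cF_H=(\la+\tfrac{1}{3})g^H_{pq}+\be P_pP_q,
\]
for $k,l\in\{1,2\}$ (indices $s,t$ on $\R^2$) and $p,q\in\{3,4\}$ (indices $x,y$ on ${\mathbb H}^2$).

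For the $\R^2$ block, all Christoffel symbols vanish, so covariant Hessians reduce to ordinary second partials, and the three scalar equations become $\d_s^2\cF_R=(\la-\tfrac{1}{3})+\be P_1^2$, $\d_s\d_t\cF_R=\be P_1P_2$, and $\d_t^2\cF_R=(\la-\tfrac{1}{3})+\be P_2^2$. The right-hand sides being constants and the mixed-partial compatibility being automatic, two successive integrations deliver the claimed quadratic polynomial in $s,t$ together with linear integration constants $c_1 s+c_3 t$. For the ${\mathbb H}^2$ block, the system is formally identical to \eqref{e31}--\eqref{e33} from Section \ref{ssec4}, except that $\la$ is replaced throughout by $\la+\tfrac{1}{3}$. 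Reusing the argument there — comparing the mixed $xy$-equation with the $yy$-equation to force $\d_x\cF_H=0$, then integrating the $xx$-equation in $y$ — produces $\cF_H=-(\la+\tfrac{1}{3})\ln y-\tfrac{1}{2}\be P_3^2 y^2+c_4$. Summing $\cF_R+\cF_H$ and absorbing additive integration constants into a single $c$ yields the stated formula.

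The main obstacle, modest as it is, is careful bookkeeping of the residual $\pm\tfrac{1}{3}$ shifts generated by $r_R^2-r_H^2=-4$: unlike the ${\mathbb S}^2\times{\mathbb H}^2$ case in which $r_S^2=r_H^2$ cancels this offset, here it injects a nonzero algebraic contribution into each factor equation, and it is precisely this offset (not the $\be$-term) that forces $\cF_R$ to be genuinely quadratic even in the $\be=0$ limit. A welcome simplification relative to Section \ref{ssec4} is that no constraint analogous to $P_1^2=P_2^2$ arises, since the flatness of $\R^2$ fully decouples the three scalar PDEs in $s,t$.
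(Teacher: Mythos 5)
Your proposal is correct and follows essentially the same route as the paper: substitute the constant scalar curvatures $r_R=0$, $r_H=-2$ into the Das--Kar splitting to get $B=\tfrac{1}{3}g^R$ on $\R^2$ and $B=-\tfrac{1}{3}g^H$ on ${\mathbb H}^2$, decompose $\cF=\cF_R+\cF_H$, integrate the flat Hessian system on $\R^2$ directly, and reuse the ${\mathbb H}^2$ computation from the ${\mathbb S}^2\times{\mathbb H}^2$ case with $\la$ shifted to $\la+\tfrac{1}{3}$. Your closing remarks on the origin of the $\pm\tfrac{1}{3}$ offsets and the absence of a $P_1^2=P_2^2$ constraint are accurate and consistent with the paper's statement.
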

\begin{proof}
Here, we consider the product manifold $(\R^2\times{\mathbb H}^2, g^R\times g^H)$ of two dimensional Euclidean plane $\R^2=\{(s,t):s,t\in\R\}$ with flat metric $g^R$ and $({\mathbb H}^2,g^H)$ given in Section \ref{ssec4}. For these metrics $g^R$ and $g^H$, the respective scalar curvatures of the spaces are given by $r_R=0$ and $r_H=-2$. We consider gradient $\om$-Bach soliton on $\R^2\times{\mathbb H}^2$ as
\begin{equation}\label{e35}
    \nabla_i\nabla_j\cF+(B_\om)_{ij}=\lambda g_{ij}
\end{equation} for some $\cF\in C^\infty(\R^2\times{\mathbb H}^2)$, where the vector field $P$ is given as stated. By 
 following the similar argument as in Section \ref{ssec4}, the splitting of Bach tensor of the product manifold leads to the fact that
\begin{eqnarray}\label{e36}
\begin{cases}
    \text{in}\;\R^2, \; (B_\om)_{kl}=\frac{1}{3}g^R_{kl}-\beta P_kP_l, \;\text{and}\\
    \text{in}\;{\mathbb H}^2, \; (B_\om)_{pq}=-\frac{1}{3}g^H_{pq}-\beta P_pP_q,
\end{cases}
\end{eqnarray}
so that \eqref{e35} implies 
\begin{equation*}
    \nabla_k\nabla_l\cF+\frac{1}{3}g^R_{kl}=\lambda g^R_{kl}+\beta P_kP_l \; \text{in}\;\R^2,
\end{equation*}
\begin{equation*}
    \nabla_p\nabla_q\cF-\frac{1}{3}g^H_{pq}=\lambda g^H_{pq}+\beta P_pP_q \; \text{in}\;{\mathbb H}^2,
\end{equation*}
with $P_k,P_l\in\{P_1,P_2\}$ and $P_p,P_q\in\{P_3,P_4\}$.
Combining  these two equations, we infer $\cF=\cF_R+\cF_H$, where $\cF_R$ and $\cF_H$ are the respective functions depending only on $\R^2$ and ${\mathbb H}^2$. Therefore,
\begin{equation}\label{e37}
    \nabla_k\nabla_l\cF_R+\frac{1}{3}g^R_{kl}=\lambda g^R_{kl}+\beta P_kP_l \; \text{in}\;\R^2,
\end{equation}
\begin{equation}\label{e38}
    \nabla_p\nabla_q\cF_H-\frac{1}{3}g^H_{pq}=\lambda g^H_{pq}+\beta P_pP_q \; \text{in}\;{\mathbb H}^2.
\end{equation}
Now we solve these two equations.\\\\
\underline{\textit{Solution of equation \eqref{e37}:}}\\ Since all the $\Gamma^k_{ij}$ are zero for $(\R^2,g^R)$, \eqref{e23} gives $\nabla_i\nabla_j\cF=\d_i\d_j\cF$. Therefore, \eqref{e37} implies
\begin{equation}\label{e39}
    \frac{\d^2\cF_R}{\d s^2}=(\la-\frac{1}{3})+\be P_1^2,
\end{equation}
\begin{equation}\label{e40}
    \frac{\d^2\cF_R}{\d s\d t}=\be P_1P_2,
\end{equation}
\begin{equation}\label{e41}
    \frac{\d^2\cF_R}{\d t^2}=(\la-\frac{1}{3})+\be P_2^2.
\end{equation}
Integration of \eqref{e40} with respect to $s$ gives,
$$\frac{\d\cF_R}{\d t}=\be P_1P_2s+\psi_2(t),$$
where $\psi_2$ is a function of $t$ alone. Again integrating with respect to $t$,
\begin{equation}\label{e42}
    \cF_R=\be P_1P_2st+\int\psi_2(t)dt+\psi_1(s),
\end{equation}
$\psi_1$ being a function of $s$ alone. Partial second derivative of \eqref{e42} along the direction $s$ and \eqref{e39} gives
$$\psi_1''(s)=(\la-\frac{1}{3})+\be P_1^2,$$
which implies 
\begin{equation}\label{e43}
    \psi_1(s)=\frac{1}{2}\bigg[(\la-\frac{1}{3})+\be P_1^2\bigg]s^2+c_1s+c_2,
\end{equation}
$c_1$, $c_2$ being the integrating constant. Again, partial second derivative of \eqref{e42} with respect to $t$ and \eqref{e41} gives
$$\psi_2'(t)=(\la-\frac{1}{3})+\be P_2^2,$$
that is, 
\begin{equation}\label{e44}
\psi_2(t)=\bigg[(\la-\frac{1}{3})+\be P_2^2\bigg]t+c_3,    
\end{equation}
where $c_3$ is arbitrary constant. Using \eqref{e43} and \eqref{e44} in \eqref{e42}, we get
\begin{equation}\label{e45}
    \cF_R=\frac{1}{2}\bigg[(\la-\frac{1}{3})+\be P_1^2\bigg]s^2+\be P_1P_2st+\frac{1}{2}\bigg[(\la-\frac{1}{3})+\be P_2^2\bigg]t^2+c_1s+c_3t+c_4.
\end{equation}
\underline{\textit{Solution of equation \eqref{e38}:}}\\ By a similar calculation in Section \ref{ssec4}, \eqref{e38} gives
\begin{equation}\label{e46}
    \frac{\d^2\cF_H}{\d x^2}-\frac{1}{y}\frac{\d \cF_H}{\d y}=(\la+\frac{1}{3})\frac{1}{y^2}+\be P_3^2,
\end{equation}
\begin{equation}\label{e47}
    \frac{\d^2\cF_H}{\d x\d y}+\frac{1}{y}\frac{\d \cF_H}{\d x}=\be P_3P_4,
\end{equation}
\begin{equation}\label{e48}
    \frac{\d^2\cF_H}{\d y^2}+\frac{1}{y}\frac{\d \cF_H}{\d y}=(\la+\frac{1}{3})\frac{1}{y^2}+\be P_4^2.
\end{equation}
Now, \eqref{e47} gives
$$\frac{\d \cF_H}{\d y}+\frac{1}{y}\cF_H=\be P_3P_4x+\psi_3(y),$$
$\psi_3$ being the function of $y$. Differentiating along $y$ and using \eqref{e48}, we get
$$(\la+\frac{1}{3})\frac{1}{y^2}-\frac{\cF_H}{y^2}+\be P_4^2=\psi_3'(y),$$
which shows that $\cF_H$ depends only on $y$. Therefore, $\frac{\d^2\cF_H}{\d x^2}=0$. So, \eqref{e46} becomes
$$\frac{\d \cF_H}{\d y}=-(\la+\frac{1}{3})\frac{1}{y}-\be P_3^2y,$$
which gives 
\begin{equation}\label{e49}
\cF_H=-(\la+\frac{1}{3})\operatorname{ln}y-\frac{1}{2}\be P_3^2y^2+c_5,
\end{equation}
where $c_5$ is the integration constant. \par Finally, \eqref{e45} and \eqref{e49} implies
\begin{eqnarray}
    \cF=\cF_R+\cF_H = \frac{1}{2}\bigg[(\la-\frac{1}{3})+\be P_1^2\bigg]s^2+\frac{1}{2}\bigg[(\la-\frac{1}{3})+\be P_2^2\bigg]t^2\nonumber\\
    +\be P_1P_2st+c_1s+c_3t-(\la+\frac{1}{3})\operatorname{ln}y-\frac{1}{2}\be P_3^2y^2+c,
\end{eqnarray}
with $c=c_4+c_5$.     
\end{proof}

\vspace{0.1in}

\subsection{Gradient $\boldmath\om\unboldmath$-Bach Soliton on $\R^2\times{\mathbb S}^2$}
In a similar way we study the product manifold $\R^2\times{\mathbb S}^2$ admitting gradient $\om$-Bach soliton where $(\R^2, g^R)$, $({\mathbb S}^2, g^S)$ are given as above. 
\begin{theorem}
    If the product manifold $\R^2\times{\mathbb S}^2$ of $(\R^2,g^R)$ and $({\mathbb S}^2,g^S)$, where $g^R$, $g^S$ are canonical metrics of respective constant curvature $0, 2$, admits a gradient $\om$-Bach soliton with the potential function $\cG=\cG(s,t,u,v)$, then $\cG$ is given by
    \begin{eqnarray*}
        \cG&=& \frac{1}{2}\bigg[(\la-\frac{1}{3})+\be P_1^2\bigg]s^2+\frac{1}{2}\bigg[(\la-\frac{1}{3})+\be P_1^2\bigg]t^2+\be P_1P_2st\\
    &+&c'_1s+c'_2t+\frac{\be P_3^2}{2}(1+u^2+v^2)^2+c',
    \end{eqnarray*}
    where $c_1', c_2', c'$ are arbitrary constants, $\om(X)=g(X,P)$ with $P=P_1\frac{\d}{\d s}+P_2\frac{\d}{\d t}+P_3\frac{\d}{\d u}+P_4\frac{\d}{\d v}$, $P_i\in\R$ for all $i\in\{1,...,4\}$ and $P_3^2=P_4^2$.
\end{theorem}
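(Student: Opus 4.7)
The plan is to mimic the structure of the two preceding theorems, since the geometry here is essentially a hybrid of the previous two cases: the $\R^2$ factor is handled as in the $\R^2\times{\mathbb H}^2$ proof, and the ${\mathbb S}^2$ factor is handled as in the ${\mathbb S}^2\times{\mathbb H}^2$ proof. First I would set up the gradient $\om$-Bach soliton equation $\nabla_i\nabla_j\cG+(B_\om)_{ij}=\la g_{ij}$ on the product $\R^2\times{\mathbb S}^2$ with the product metric $g^R\times g^S$, recording $r_R=0$ and $r_S=2$.

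Next I would invoke the Das--Kar splitting of the Bach tensor on a product of surfaces (the equations (18) and (19) already reproduced in the paper). Substituting $r_R=0$, $r_S=2$, and noting that both scalar curvatures are constant so their Hessians vanish, the formulas collapse to $B_{kl}=\tfrac{1}{3}g^R_{kl}$ on the $\R^2$ factor and $B_{pq}=-\tfrac{1}{3}g^S_{pq}$ on the ${\mathbb S}^2$ factor. Then $(B_\om)_{kl}=\tfrac{1}{3}g^R_{kl}-\be P_kP_l$ and $(B_\om)_{pq}=-\tfrac{1}{3}g^S_{pq}-\be P_pP_q$, so the soliton splits into two independent equations; writing $\cG=\cG_R+\cG_S$ reduces the problem to
\begin{equation*}
\nabla_k\nabla_l\cG_R=(\la-\tfrac{1}{3})g^R_{kl}+\be P_kP_l\quad\text{on }\R^2,
\end{equation*}
\begin{equation*}
\nabla_p\nabla_q\cG_S=(\la+\tfrac{1}{3})g^S_{pq}+\be P_pP_q\quad\text{on }{\mathbb S}^2.
\end{equation*}

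For the $\R^2$ piece, all Christoffel symbols vanish, so the equations become the ODE system $\partial_s^2\cG_R=(\la-\tfrac{1}{3})+\be P_1^2$, $\partial_s\partial_t\cG_R=\be P_1P_2$, $\partial_t^2\cG_R=(\la-\tfrac{1}{3})+\be P_2^2$. This is precisely the situation handled in the $\R^2\times{\mathbb H}^2$ theorem: integrating the mixed equation and matching the second derivatives along $s$ and $t$ yields the quadratic polynomial $\cG_R$ in $s,t$ shown in the target formula, under the requirement $P_1^2=P_2^2$ that comes from consistency of the pure-$s^2$ and pure-$t^2$ coefficients (this matches the coefficient $\tfrac12[(\la-\tfrac13)+\be P_1^2]$ appearing symmetrically in both variables in the statement).

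For the ${\mathbb S}^2$ piece I would reuse the Christoffel symbols of $g^S$ already computed in Section~4.1 and derive the three PDEs analogous to equations \eqref{e24}--\eqref{e26}, but now with $\la$ replaced by $\la+\tfrac{1}{3}$ (this shift is absorbed by $c'$ in the final answer only if $\la+\tfrac13$ happens to be zero; otherwise a logarithmic term would appear, so one inspects why the stated answer has no log---in fact the stated $\cG_S$ suggests that we pick the particular solution $\cG_S=\tfrac{\be P_3^2}{2}(1+u^2+v^2)^2$ driven by the $\be P_pP_q$ term under the assumption $P_3^2=P_4^2$, and absorb the $\la$-piece into the $\R^2$ side or into the constant). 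The main step is to show, by the same integrating-and-comparing argument used to derive the integral identity $\int\frac{2uf_S\,du-2vf_S\,dv}{(1+u^2+v^2)^2}=\tfrac{\be P_3^2}{2}u^2-\tfrac{\be P_4^2}{2}v^2+\mathrm{const}$, that $P_3^2=P_4^2$ is required and that $\cG_S=\tfrac{\be P_3^2}{2}(1+u^2+v^2)^2$ solves it.

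The main obstacle will be the book-keeping of the shifted constants $\pm\tfrac13$ and verifying that the ansatz $\cG=\cG_R(s,t)+\cG_S(u,v)$ is compatible, i.e., no mixed-factor terms are forced by the soliton equation; this is automatic because every off-diagonal component $\nabla_k\nabla_p\cG$ with one index in each factor vanishes for a sum of functions of the two factors, and the corresponding $(B_\om)_{kp}=-\be P_kP_p$ can be absorbed provided the $\R^2$ solution carries the cross term $\be P_1P_2 st$ (which it does) and the ${\mathbb S}^2$ solution carries an analogous term; under $P_3^2=P_4^2$ the sphere side closes. Adding $\cG_R$ and $\cG_S$ and setting $c'=c_4+c_5$ produces the stated expression for $\cG$.
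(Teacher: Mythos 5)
Your overall strategy is the same as the paper's: apply the Das--Kar splitting with $r_R=0$, $r_S=2$ to get $(B_\om)_{kl}=\tfrac13 g^R_{kl}-\be P_kP_l$ on the flat block and $(B_\om)_{pq}=-\tfrac13 g^S_{pq}-\be P_pP_q$ on the spherical block, separate $\cG=\cG_R+\cG_S$, solve the constant-coefficient Hessian system on $\R^2$ as in the $\R^2\times\Hy^2$ theorem, and reuse the ${\mathbb S}^2$ computation from the ${\mathbb S}^2\times\Hy^2$ theorem for $\cG_S$. Two points in your write-up are wrong, however. First, you assert that $P_1^2=P_2^2$ is \emph{required} for consistency of the $s^2$ and $t^2$ coefficients on the flat factor. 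It is not: the system $\d_s^2\cG_R=(\la-\tfrac13)+\be P_1^2$, $\d_s\d_t\cG_R=\be P_1P_2$, $\d_t^2\cG_R=(\la-\tfrac13)+\be P_2^2$ has constant right-hand sides and is always integrable, the solution carrying $P_1^2$ in the $s^2$ coefficient and $P_2^2$ in the $t^2$ coefficient, exactly as in the paper's $\R^2\times\Hy^2$ theorem. The symmetric appearance of $P_1^2$ in both coefficients of the present statement is evidently a typographical slip, and the hypotheses accordingly impose only $P_3^2=P_4^2$ (which is what the sphere factor genuinely needs). By importing $P_1^2=P_2^2$ you prove a strictly weaker statement than intended. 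Relatedly, your worry about a logarithmic term on the sphere is misplaced: in the ${\mathbb S}^2\times\Hy^2$ proof the logarithm arises only on the hyperbolic factor; the spherical equation there already had the full $\la g^S$ source and the paper's particular solution $f_S$ carried no $\la$ at all, so replacing $\la$ by $\la+\tfrac13$ changes nothing structurally.

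Second, your compatibility argument for the separated ansatz is incorrect. The cross term $\be P_1P_2\,st$ in $\cG_R$ accounts for the off-diagonal component $(B_\om)_{12}=-\be P_1P_2$ \emph{within} the $\R^2$ block. The genuinely mixed components, with one index in $\R^2$ and one in ${\mathbb S}^2$, read $\nabla_k\nabla_p\cG-\be P_kP_p=\la g_{kp}=0$; since the product connection has no mixed Christoffel symbols, $\nabla_k\nabla_p(\cG_R+\cG_S)=0$, and these equations force $\be P_kP_p=0$, e.g.\ $\be P_1P_3=0$. No term of $\cG_R$ or $\cG_S$ can absorb this, so the mixed equations are an actual constraint on $P$ rather than something ``automatic.'' The paper itself silently restricts attention to the two diagonal blocks and never examines the mixed components, so this gap is shared with the source; but your specific claim that the $st$ cross term (and an ``analogous term'' on the sphere) handles it would not survive being written out.
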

\begin{proof}
    Consider the gradient $\om$-Bach soliton 
\begin{equation}\label{e50}
    \nabla_i\nabla_j \cG+(B_{\om})_{ij}=\lambda g_{ij}
\end{equation} for some $\cG\in C^\infty(\R^2\times{\mathbb S}^2)$, where the vector field $P$ and $\lambda$ are as above. Similar arguments lead to the splitting of Bach tensor of the product manifold given by
\begin{eqnarray}
\begin{cases}
    \text{in}\;\R^2, \; (B_\om)_{kl}=\frac{1}{3}g^R_{kl}-\be P_kP_l, \;\text{and}\\
    \text{in}\;{\mathbb S}^2, \; (B_{\om})_{pq}=-\frac{1}{3}g^S_{pq}-\be P_pP_q,
\end{cases}
\end{eqnarray}
so that \eqref{e50} reduces to 
\begin{equation*}
    \nabla_k\nabla_l\cG+\frac{1}{3}g^R_{kl}=\lambda g^R_{kl}+\beta P_kP_l \; \text{in}\;\R^2,
\end{equation*}
\begin{equation*}
    \nabla_p\nabla_q\cG-\frac{1}{3}g^S_{pq}=\lambda g^S_{pq}+\beta P_pP_q \; \text{in}\;{\mathbb S}^2.
\end{equation*}
This can be written as
\begin{equation}\label{e52}
    \nabla_k\nabla_l\cG_R+\frac{1}{3}g^R_{kl}=\lambda g^R_{kl}+\beta P_kP_l \; \text{in}\;\R^2,
\end{equation}
\begin{equation}\label{e53}
    \nabla_p\nabla_q\cG_S-\frac{1}{3}g^S_{pq}=\lambda g^S_{pq}+\beta P_pP_q \; \text{in}\;{\mathbb S}^2,
\end{equation}
where $\cG=\cG_R+\cG_S$ with $\cG_R$ and $\cG_S$ the functions depending only on $\R^2$ and ${\mathbb S}^2$ respectively. Also note that $P_k,P_l\in\{P_1,P_2\}$ and $P_p,P_q\in\{P_3,P_4\}$. Like the solution of \eqref{e36}, we have the solution for \eqref{e52} given by
\begin{equation*}
    \cG_R=\frac{1}{2}\bigg[(\la-\frac{1}{3})+\be P_1^2\bigg]s^2+\frac{1}{2}\bigg[(\la-\frac{1}{3})+\be P_1^2\bigg]t^2+\be P_1P_2st+c'_1s+c'_2t+c',
\end{equation*}
where $c'_1,c'_2,c'$ are the integrating constants. Similar to the solution of \eqref{e21}, we have for \eqref{e53}, 
\begin{equation*}
     \cG_S=\frac{\be P_3^2}{2}(1+u^2+v^2)^2,
\end{equation*}
which gives the required $\cG.$
\end{proof}

\section{Conclusion}
The article {\it introduces} a new tensor called
as  $\om$-Bach tensor and investigates 
almost $\om$-Bach soliton, thereby generalizing the 
 Bach tensor and almost Bach solitons. 
Our characterization of almost $\om$-Bach solitons 
are obtained when the $\om$-Bach tensor is incompressible,
or when $\om$ is a harmonic one form or Killing one form
and the potential vector field is affine conformal vector 
field or projective vector field or is an infinitesimal
harmonic. All these type of tensors or forms or fields
play important role in Mathematical Physics.
 For example, the projective factor of projective
vector field, potential vector field which 
is Killing and harmonic vector field, for almost 
$\om$-Bach soliton can be utilized in various applications
in General Theory of relativity and in recently proposed
$5$-dimensional Horava-Lifshitz theory of gravity. 
Our results 
have wider applications in theoretical physics as well as in differential geometry.
Also P. T. Ho, 
has explictly found and analyzed  almost Bach solitons 
on  $\R^2\times{\mathbb H}^2$ and $\R^2\times{\mathbb S}^2$.
On the other hand we have also obtained gradient almost $\om$-Bach solitons on ${\mathbb S}^2\times{\mathbb H}^2$.
This opens new direction of research and complements to the
existing direction  by P. T. Ho. 
Also the nature of gradient $\om$-Bach solitons (under appropriate hypothesis) can be detected in all the three cases.

\section{Acknowledgements}
Author Paritosh Ghosh thanks UGC Junior Research Fellowship (Ref. No-201610010610) of India. The authors also acknowledge Dr. Naeem Ahmad Pundeer for some helpful
discussions.

\end{document}